\newcommand{\C}{\mathbb{C}} 
\newcommand{\D}{\mathbb{D}} 
\newcommand{\nat}{\mathbb{N}} 
\newcommand{\HD}{\D_{2}^{\infty}} 
\numberwithin{equation}{section}
\newtheorem{theorem}{Theorem}[section]
\newtheorem{lemma}[theorem]{Lemma}
\theoremstyle{definition}
\newtheorem{definition}[theorem]{Definition}
\newtheorem{remark}[theorem]{Remark}
\begin{document}

\address{Washington University in St. Louis\\ Department of Mathematics\\ One Brookings Drive\\ St. Louis, MO 63130 USA}

\email{geknese@wustl.edu}

\subjclass[2020]{47A48, 47A13, 47A57, 46E50, 30B50, 32A38, 46G20, 32E30}

\title{The Schur-Agler class in infinitely many variables}

\author{Greg Knese}

\thanks{Partially supported by NSF grant DMS-2247702}

\keywords{Schur class, Schur-Agler class, Agler class, transfer function realization,
Hilbert polydisk, Dirichlet series, Bohr correspondence, 
von Neumann's inequality, infinite polydisk, holomorphy in infinite dimensions,
Pick interpolation}

\date{\today}

\begin{abstract}
We define the Schur-Agler class in infinite variables
to consist of functions whose restrictions to finite dimensional
polydisks belong to the Schur-Agler class.  We show 
that a natural generalization of an Agler decomposition 
holds and the functions possess transfer function realizations
that allow us to extend the functions to the unit ball 
of $\ell^\infty$.  We also give a Pick interpolation type theorem
which displays a subtle difference with finitely many variables.
Finally, we make a brief connection to Dirichlet series derived 
from the Schur-Agler class in infinite variables via the Bohr correspondence.
\end{abstract}

\maketitle

\section{Introduction}

This article is about establishing basic properties of
the Schur-Agler class in infinitely many variables.
To back up a bit, the \emph{Schur class} in $N$ variables, $\mathcal{S}_N$,
 will
refer to the set of analytic functions $f:\D^N \to \overline{\D}$
where $\D^N$ is the $N$ dimensional unit polydisk
\[
\D^N= \{z=(z_1,\dots, z_N) \in \C^N: \forall j, |z_j|<1\}.
\]
The Schur class, $\mathcal{S}_{\infty}$, in infinitely many variables will refer
to holomorphic functions (meaning complex Fréchet differentiable)
 on $Ball(\ell^{\infty})$ that
are bounded by one in supremum norm.  
(We review standard notations in Section \ref{sec:notations}.)

A remarkable result, attributed to Hilbert, is
that if we are given Schur class functions $f_N \in \mathcal{S}_N$ for each $N$,
and if for $N>M$ we have
\[
f_N(z_1,\dots, z_M,0,\dots, 0) \equiv f_M(z_1,\dots, z_M),
\]
then
there exists a holomorphic function $f: Ball(c_0)\to \overline{\D}$ 
such that $f(z_1,\dots, z_N,0,\dots) = f_N(z_1,\dots, z_N)$
and such that $f$ is continuous in the norm topology on $Ball(c_0)$.
See \cite{Dirichletbook}, Theorem 2.21 for details.

Even more, $f$ has a homogeneous expansion
\[
f(z) = \sum_{m=0}^{\infty} P_m(z)
\]
where each $P_m$ is an $m$-homogeneous 
form on $c_0$ (see \cite{Dirichletbook}, Proposition 2.28).
Davie-Gamelin \cite{DG} proved that $f$ and its homogeneous
expansion extends further to $Ball(\ell^{\infty})$.
This extension (called the Aron-Berner extension) 
is somewhat elaborate as it requires 
passing to the symmetric $m$-multilinear form
associated to each $P_m$, extending to $\ell^{\infty}$
and then proving that the extended homogeneous expansion
converges in $Ball(\ell^{\infty})$. 

\begin{remark} \label{absremark}
An important subtlety in all of this theory is that, although we can form 
a Taylor series $\sum_{\alpha} f_{\alpha} z^{\alpha}$ associated to $f$
that converges absolutely to $f$ on the Hilbert multidisk $\D_2^{\infty} = Ball(\ell^{\infty})\cap \ell^2$,
in general there will be points in $Ball(c_0)$
where the Taylor series does not converge absolutely.  See \cite{Dirichletbook}, Proposition 4.6 and 
Theorem 10.1. $\diamond$
\end{remark}

\begin{remark} \label{Dirichletremark}
An important motivation in recent decades for the study of $\mathcal{S}_{\infty}$
is through its application to Dirichlet series.  
In particular, the space $H^{\infty}(Ball(c_0))$ of bounded holomorphic functions on $Ball(c_0)$
is isometrically isomorphic to the space $\mathscr{H}^{\infty}$ of Dirchlet series that converge and are bounded on
the right half plane $\{z\in \C: \Re z >0\}$.
The isomorphism, called the the Bohr correspondence, is given by
\[
F \in H^{\infty}(Ball(c_0)) \mapsto f(s) = F(p_1^{-s}, p_2^{-s},\dots) \in \mathscr{H}^{\infty}
\]
where $p_1=2,p_2=3,\dots$ are the prime numbers.
The norm in both cases refers to the supremum norm. 
The space $\mathscr{H}^{\infty}$ and its isomorphism with $H^{\infty}(Ball(c_0))$ 
appears naturally in the study of dilation completeness problems on $L^2(0,1)$ as presented
in Hedenmalm-Lindqvist-Seip \cite{HLS}.  
See also \cite{McCarthy}, \cite{QQ}, \cite{HK}, \cite{Nikolski}, \cite{Nikolskibook},\cite{Dirichletbook}.
$\diamond$
\end{remark}

Returning to the main topic, the Schur-Agler class in $N$ dimensions, $\mathcal{A}_N$, consists of $f \in \mathcal{S}_N$
such that for any $N$-tuple $T=(T_1,\dots, T_N)$ of strictly contractive commuting operators
on a Hilbert space we have 
\[
\|f(T)\| \leq 1
\]
where $f(T)$ is defined using absolutely convergent power series.
We will say \emph{Agler class} for short.  
An inequality of von Neumann \cite{vN} proves that the Agler class in one variable
coincides with the Schur class in one variable; $\mathcal{A}_1=\mathcal{S}_1$.
And\^{o}'s dilation theorem \cite{Ando} proves that the same relation holds in two variables; 
namely; $\mathcal{A}_2=\mathcal{S}_2$.  
Counterexamples first constructed by Varopoulos \cite{varo} show that $\mathcal{A}_N\ne \mathcal{S}_N$
for $N >2$.
A basic motivation for studying the Agler class is that it can provide insights
into the more classical spaces $\mathcal{S}_1,\mathcal{S}_2$---
see Agler-M$^{\text{c}}$Carthy-Young \cite{AMY}, \cite{AMY2}, \cite{AMYbook}.
On the other hand, the Agler class is interesting more broadly:
(1) for studying the operator
theoretic problem of understanding the failure of von Neumann's 
inequality in 3 or more variables
and (2) for providing a large source of interesting and easily
constructible examples of functions within $\mathcal{S}_N$.
Some recent papers on the Agler class are in \cite{Barik}, \cite{Bhowmik},\cite{Debnath}, \cite{Kojin}.

Functions in the Agler class have a variety of useful properties.
First, they possess an \emph{Agler decomposition} and an associated
interpolation theorem.  An Agler decomposition is a formula of the form
\begin{equation} \label{basicad}
1- f(z)\overline{f(w)} = 
\sum_{j=1}^{N}(1-\bar{w}_j z_j) K_j(z,w)
\end{equation}
where $K_1,\dots, K_N$ are positive semi-definite kernels on $\D^N\times\D^N$.
The Agler-Pick interpolation theorem can be stated in the following form.
Some standard references are \cite{Agler}, \cite{AMpick}, \cite{AMbook}.

\begin{theorem}[Agler] \label{AgPick}
Given a finite subset $X \subset \D^N$ and a function $f:X \to \overline{\D}$
the following are equivalent:
\begin{enumerate} 
\item There exists $\tilde{f} \in \mathcal{A}_N$ with $\left.\tilde{f}\right|_{X} = f$.
\item There exist positive semi-definite functions $K_1,\dots, K_N$ on $X\times X$
such that for $z,w \in X$
\[
1- f(z)\overline{f(w)} = \sum_{j=1}^{N} (1-z_j\bar{w}_j)K_j(z,w).
\]

\item For every $N$-tuple $T=(T_1,\dots, T_N)$ of commuting, contractive, simultaneously
diagonalizable matrices whose joint eigenspaces have dimension at most 1 and satisfy
$\sigma(T) \subset X$, we have $\|f(T)\|\leq 1$.
\end{enumerate}

\end{theorem}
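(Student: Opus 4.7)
The plan is to prove the cycle $(1) \Rightarrow (3) \Rightarrow (2) \Rightarrow (1)$. The first implication is essentially a direct consequence of the definition of $\mathcal{A}_N$, the third is the classical lurking-isometry/transfer-function realization construction, and the middle implication, which converts the operator inequality into a kernel decomposition, will be the main obstacle.

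For $(1) \Rightarrow (3)$: suppose $\tilde f \in \mathcal{A}_N$ extends $f$ and $T=(T_1,\dots,T_N)$ is a tuple as in (3). Replacing $T$ by $rT$ with $0<r<1$ yields commuting strict contractions, so $\|\tilde f(rT)\|\le 1$ by membership in $\mathcal{A}_N$. Because $T$ is simultaneously diagonalizable with spectrum in $X\subset\D^N$, the joint spectra of $rT$ and $T$ lie in a common compact subset of $\D^N$ for $r$ near $1$; the holomorphic functional calculus yields $\tilde f(rT)\to \tilde f(T)$ as $r\uparrow 1$, and $\tilde f(T)=f(T)$ since the operator is determined by the values of $\tilde f$ on the joint spectrum. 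Hence $\|f(T)\|\le 1$.

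For $(2) \Rightarrow (1)$, I use the lurking-isometry construction. Factoring each kernel in (2) via Kolmogorov as $K_j(z,w)=\langle h_j(z),h_j(w)\rangle_{\mathcal{H}_j}$ and rearranging the Agler decomposition yields
\[
1+\sum_{j=1}^N \langle z_j h_j(z), w_j h_j(w)\rangle = f(z)\overline{f(w)} + \sum_{j=1}^N \langle h_j(z), h_j(w)\rangle, \qquad z,w\in X,
\]
which says the map $V:(1,z_1 h_1(z),\dots,z_N h_N(z))\mapsto(f(z),h_1(z),\dots,h_N(z))$ is isometric on its domain in $\C\oplus\bigoplus_j \mathcal{H}_j$. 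Extending $V$ to a unitary colligation $U=\begin{pmatrix}A & B\\ C & D\end{pmatrix}$ on a possibly enlarged Hilbert space and forming the transfer function
\[
\tilde f(z)=A+B Z(z)\bigl(I-D Z(z)\bigr)^{-1}C, \qquad Z(z)=\operatorname{diag}(z_1 I,\dots, z_N I),
\]
produces an extension $\tilde f\in \mathcal{A}_N$: an Agler decomposition for $\tilde f$ on all of $\D^N$ falls out of the unitarity of $U$, and $\tilde f|_X=f$ follows by reversing the isometric identity.

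The main obstacle is $(3) \Rightarrow (2)$, which I approach by Hahn--Banach separation. Let $\mathcal{C}$ be the convex cone of Hermitian kernels on $X\times X$ of the form $\sum_{j=1}^N (1-z_j\bar w_j)K_j(z,w)$ with each $K_j$ positive semidefinite; finiteness of $X$ makes $\mathcal{C}$ closed in the finite-dimensional real vector space of Hermitian kernels on $X\times X$. If $1-f(z)\overline{f(w)}\notin \mathcal{C}$, a separating Hermitian linear functional $L(g)=\sum_{z,w\in X} a_{z,w}g(z,w)$ satisfies $L\ge 0$ on $\mathcal{C}$ and $L(1-f\bar f)<0$. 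Duality of the positive semidefinite cone forces $(a_{z,w})$ to be psd and each Hadamard product of $(a_{z,w})$ with $(1-z_j\bar w_j)$ to be psd. Quotienting $\C^X$ by the null space of the associated form yields a Hilbert space $\mathcal{K}$ on which the multiplication-by-$z_j$ operators descend to commuting contractions $T_j$, simultaneously diagonalized by the nonzero images of the standard basis vectors $e_z$, with joint spectrum in $X$ and one-dimensional joint eigenspaces. A direct computation gives $L(1-f\bar f)=\|[\mathbf{1}]\|^2_{\mathcal{K}}-\|f(T)[\mathbf{1}]\|^2_{\mathcal{K}}$ where $\mathbf{1}=\sum_z e_z$, so $L(1-f\bar f)<0$ contradicts $\|f(T)\|\le 1$ from (3). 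The subtle points are (i) aligning the inner-product convention on $\mathcal{K}$ so that contractivity of $T_j$ matches the duality-derived positivity exactly, and (ii) confirming simplicity of the joint eigenspaces; both rely on finiteness of $X$.
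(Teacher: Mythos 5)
The paper does not actually prove Theorem \ref{AgPick}; it cites it as known and remarks that the infinite-variable arguments later in the paper specialize to give a proof. Your cycle $(1)\Rightarrow(3)\Rightarrow(2)\Rightarrow(1)$ is exactly that specialization: $(1)\Rightarrow(3)$ matches the paper's continuity argument with $rT\to T$ via simultaneous diagonalization, $(2)\Rightarrow(1)$ matches Lemma \ref{lemmalurking}, and $(3)\Rightarrow(2)$ matches the cone-separation argument in the proof of Theorem \ref{Pickthm}. All of these steps are correct in substance.

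The one place where you assert something that genuinely requires proof is the claim that ``finiteness of $X$ makes $\mathcal{C}$ closed.'' Finite-dimensionality of the ambient space of Hermitian kernels does not by itself give closedness: $\mathcal{C}$ is a sum of $N$ linear images of the PSD cone, and a sum of closed convex cones in finite dimensions can fail to be closed. The standard fix --- and the content of the paper's Lemma \ref{closedlemma} in the harder infinite-variable setting --- is a compactness argument: if $C_n=\sum_j(1-z_j\bar w_j)K_j^{(n)}\to C$, Schur-multiply by $S(z,w)=\prod_{j=1}^N(1-z_j\bar w_j)^{-1}$ (positive semi-definite with $S\succeq 1$ since $X\subset\D^N$) to get $C_nS=\sum_j S_jK_j^{(n)}\succeq\sum_j K_j^{(n)}\succeq K_i^{(n)}$, which bounds each $K_j^{(n)}$ uniformly; then pass to convergent subsequences and conclude $C\in\mathcal{C}$. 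You should also note that deducing $B\succeq 0$ from the separating functional requires testing against $K_1=(a\otimes\bar a)/(1-Z_1\otimes\bar Z_1)$ rather than $a\otimes\bar a$ itself, as the paper does; this is a one-line Schur-product observation but it is not literally ``duality of the PSD cone.'' With these two points supplied, your argument is complete.
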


Item (1) is a way of phrasing an interpolation problem as an extension problem.
Item (2) is a restriction of an Agler decomposition.
(See Section \ref{sec:notations} for the definition of positive semi-definite function.)
Item (3) says that the function $f$ needs to satisfy a particular type of 
matrix von Neumann
inequality.  Notice that in this case $f(T)$ can be defined by using the diagonalization of $T$, and the dimension
of the space that the $T_j$ act on is at most $\# X$.
Item (3) is not stated explicitly in the literature, at least not in this form, but it is a known component of
the Agler-Pick interpolation theorem.  
(For the skeptical reader, 
the approach in this paper proves a generalization to infinite variables and does not directly rely on the finite variable
theorem so one could pull a proof of the equivalence of (3) by simplifying certain proofs below.)
We think item (3) is important to emphasize since it (conceptually) gives a way 
to \emph{check} if interpolation is possible while item (2) is a useful \emph{conclusion} when you know
interpolation is possible. Item (3) is also the source of a subtlety in infinite variables.

A second key property of Agler class functions is that they possess a contractive transfer function realization formula, which
means the following.  There exists a contractive operator $V$ acting on $\C\oplus \bigoplus_{j=1}^{N} \mathcal{H}_j$
where $\mathcal{H}_1,\dots, \mathcal{H}_N$ are Hilbert spaces such that when we write $V$ in block
form $V = \begin{pmatrix} A & B \\ C & D\end{pmatrix}$ we have
\begin{equation}\label{basictfr}
f(z) = A + B\Delta(z) (1-D\Delta(z))^{-1} C
\end{equation}
where $\Delta(z) = \sum_{j=1}^{N} z_j P_j$ and each $P_j$ represents projection
onto $\mathcal{H}_j$ within the direct sum $\bigoplus_{k=1}^{N} \mathcal{H}_k$.
It turns out that membership in the Agler class can be tested using \emph{generic} matrices.
Namely, analytic $f:\D^N\to \overline{\D}$ belongs to $\mathcal{A}_N$ if for every $N$-tuple $T$
of commuting contractive simultaneously diagonalizable 
matrices with joint eigenspaces having dimension $1$
we have $\|f(T)\|\leq 1$.  With this reduction, defining $f(T)$ only requires
the evaluation of $f$ on the joint eigenvalues of $T$ and not any regularity or
absolute summability.  This can be derived from item (3) in Theorem \ref{AgPick}
or see \cite{polyhedraAMY} (Theorem 6.1 therein) where something more general is proven.

Our goal is to prove that analogues of the Agler decomposition \eqref{basicad}, 
the Agler-Pick interpolation theorem (Theorem \ref{AgPick}), 
and the transfer function realization \eqref{basictfr}
hold in infinitely many variables.  We also wish to make connections
to Dirichlet series as in Remark \ref{Dirichletremark}.
Remark \ref{absremark} suggests that
we cannot define $f(T)$ in the infinite variable setting using absolutely convergent
series.

We would like to remark that this paper is not the first mention of the
Agler class in infinitely many variables---see \cite{AL}, Section 7. 
However one of our larger goals is to start with the simplest definition
possible and deduce basic properties of the Agler class (such as its functional calculus)
 as well as to point out some subtleties of the theory.
 Here is what we imagine to be the simplest definition
 of the Agler class in infinitely many variables.

\begin{definition}
Given a function $f: Ball(c_{00}) \to \C$ we say $f$
is in the \emph{Agler class in infinite variables}, $\mathcal{A}_{\infty}$,
 if for every $N$, the restriction of $f$
to $\D^N$ belongs to the Agler class in $N$ variables, $\mathcal{A}_N$.
\end{definition}

\begin{theorem} \label{mainthm}
If $f \in \mathcal{A}_{\infty}$, then $f$ has a transfer function realization:
there exists a contractive operator $V$ acting on $\C\oplus \bigoplus_{j=1}^{\infty} \mathcal{H}_j$
where $\mathcal{H}_1,\mathcal{H}_2, \dots$ are Hilbert spaces such that when we write $V$ in block
form $V = \begin{pmatrix} A & B \\ C & D\end{pmatrix}$ we have
\begin{equation}\label{tfragain}
f(z) = A + B\Delta(z) (1-D\Delta(z))^{-1} C
\end{equation}
where $\Delta(z) = \sum_{j=1}^{\infty} z_j P_j$ and each $P_j$ represents projection
onto $\mathcal{H}_j$ within the direct sum $\bigoplus_{k=1}^{\infty} \mathcal{H}_k$.
Letting $f_N(z) = f(z_1,\dots, z_N,0 ,\dots)$, we have
the following extension of von Neumann's inequality: 
 for any tuple $T=(T_1,T_2,\dots)$ of
commuting contractive operators (acting on a common Hilbert space)
such that $\sup_{j} \|T_j\| <\infty$ 
we have that
\[
\lim_{N \to \infty} f_N(T)
\]
converges in the strong operator topology to a natural definition of $f(T)$ as
\[
f(T) := (A\otimes I) + (B\otimes I) \Delta(T) (1-(D\otimes I)\Delta(T))^{-1} (C\otimes I)
\]
where $\Delta(T) := \sum_{j=1}^{\infty} P_j \otimes T_j$ (also convergent in 
the strong operator topology).
\end{theorem}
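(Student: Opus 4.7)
The plan is to proceed in three stages: first, extract an infinite-variable Agler decomposition as a limit of the finite-variable ones; second, apply a lurking isometry argument to obtain the contractive $V$ and the transfer function formula \eqref{tfragain}; and third, verify the strong operator topology convergence. The main technical obstacle will be the compactness/passage-to-the-limit argument in the first stage.

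For the first stage, the hypothesis $f\in\mathcal{A}_\infty$ combined with the finite-variable Agler decomposition gives, for each $N$, positive semi-definite kernels $K^{(N)}_1,\dots,K^{(N)}_N$ on $\D^N\times\D^N$ satisfying
\[
1-f_N(z)\overline{f_N(w)}=\sum_{j=1}^N (1-z_j\bar w_j)K^{(N)}_j(z,w).
\]
Setting $z=w$ gives $(1-|z_j|^2)K^{(N)}_j(z,z)\le 1$, and Cauchy--Schwarz for positive kernels then bounds $|K^{(N)}_j(z,w)|$ uniformly on compact subsets of $\D^M\times\D^M$ for every $N\ge\max(j,M)$. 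Since each kernel is holomorphic in $z$ and conjugate-holomorphic in $w$, Montel's theorem plus a Cantor diagonal argument extracts a subsequence $N_k\to\infty$ and positive semi-definite limit kernels $K_j$ on $Ball(c_{00})\times Ball(c_{00})$ with $K^{(N_k)}_j\to K_j$ uniformly on compacta. Taking limits in the displayed identity produces the infinite Agler decomposition
\[
1-f(z)\overline{f(w)}=\sum_{j=1}^\infty (1-z_j\bar w_j) K_j(z,w),
\]
which is a finite sum at each fixed pair $(z,w)\in Ball(c_{00})\times Ball(c_{00})$.

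For the second stage, let $\mathcal{H}_j$ be the reproducing kernel Hilbert space of $K_j$, with generators $E_j(z)=K_j(\cdot,z)$ so that $\langle E_j(w),E_j(z)\rangle=K_j(z,w)$. The Agler decomposition says precisely that the assignment
\[
\bigl(1;\; z_1 E_1(z),\, z_2 E_2(z),\dots\bigr)\;\longmapsto\;\bigl(f(z);\; E_1(z),\, E_2(z),\dots\bigr)
\]
preserves inner products between generators, and hence extends linearly to an isometry between the corresponding subspaces of $\C\oplus\bigoplus_{j=1}^\infty\mathcal{H}_j$. Extending to a contraction $V$ on the whole space by the usual trick (adjoint, zero extension, readjoint) and writing $V=\begin{pmatrix}A&B\\C&D\end{pmatrix}$ in block form, a routine algebraic rearrangement of the isometry identity gives \eqref{tfragain}.

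For the third stage, mutually orthogonal projections $P_j$ make $\Delta(T)=\sum_{j=1}^\infty P_j\otimes T_j$ convergent in SOT: for $h=\sum_j h_j$ with $h_j$ in the range of $P_j\otimes I$,
\[
\|\Delta(T)h\|^2=\sum_j \|(I\otimes T_j)h_j\|^2\le (\sup_j\|T_j\|)^2\|h\|^2,
\]
and the partial sums converge in norm, so $\Delta_N(T)\to\Delta(T)$ in SOT with $\|\Delta(T)\|\le\sup_j\|T_j\|$. Since setting $z_{N+1}=z_{N+2}=\dots=0$ in \eqref{tfragain} expresses $f_N$ through the same $V$ with $\Delta_N$ replacing $\Delta$, the claimed SOT convergence $f_N(T)\to f(T)$ reduces to SOT continuity of the resolvent $(1-(D\otimes I)\Delta(T))^{-1}$ in the norm-bounded region where $\|(D\otimes I)\Delta(T)\|<1$, which is automatic for strict contractions. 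The general contractive case follows by replacing $T$ with $rT$ for $r<1$ and letting $r\to 1^-$.
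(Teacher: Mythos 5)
Your overall architecture (finite-variable Agler decompositions, pass to a limit decomposition, lurking isometry, resolvent identity for the SOT statement) is the same as the paper's, and Stages 2 and 3 are essentially sound \emph{given} what they require as input. The genuine gap is in Stage 1, at precisely the step you flagged as the main obstacle. First, the assertion that $\sum_{j=1}^{\infty}(1-z_j\bar w_j)K_j(z,w)$ ``is a finite sum at each fixed pair $(z,w)\in Ball(c_{00})\times Ball(c_{00})$'' is false: for $j$ beyond the supports of $z$ and $w$ the $j$-th term equals $K_j(z,w)$, not $0$, so the series has infinitely many generally nonzero terms and its convergence must be proved. Second, you cannot pass to the limit in $1-f_{N_k}\otimes\overline{f_{N_k}}=\sum_{j=1}^{N_k}(1-Z_j\otimes\bar Z_j)K_j^{(N_k)}$ from termwise convergence $K_j^{(N_k)}\to K_j$ alone, because the number of summands grows with $k$ and mass can escape into the tail. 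The paper handles this by dominating $\sum_{j\le M}K_j^N\preceq S$, where $S(z,w)=\prod_j(1-z_j\bar w_j)^{-1}$ is the Szeg\H{o}-type kernel on the Hilbert multidisk (whence the restriction to sets $\D_\rho$ with $\rho\in\ell^2$); this yields $\sum_j K_j(z,z)<\infty$ and absolute convergence, and an explicit tail estimate then shows the limit identity holds only up to an extra positive kernel $K_0\succeq0$, which must afterwards be absorbed into $K_1$ via multiplication by $(1-z_1\bar w_1)^{-1}$. (A cheaper diagonal bound $\sum_{j=1}^N K_j^N(z,z)\le(1-\|z\|_\infty^2)^{-1}$ is available, but some uniform-in-$N$ tail control of this kind is indispensable and is entirely absent from your argument.)

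The missing summability $\sum_j K_j(z,z)<\infty$ is not cosmetic: it is exactly what makes Stage 2 run. Without it the image vector $(f(z);E_1(z),E_2(z),\dots)$ need not lie in $\C\oplus\bigoplus_{j}\mathcal{H}_j$, so your inner-product-preserving assignment is not defined, and the rearranged identity $1+\sum_j z_j\bar w_jK_j(z,w)=f(z)\overline{f(w)}+\sum_jK_j(z,w)$ encoding the isometry has a possibly divergent right-hand side. The condition is also not automatic: the paper's remark on the Dritschel--McCullough example exhibits a norm-one functional on $\ell^\infty$ admitting no decomposition with $\sum_jK_j(z,z)<\infty$. Two smaller points in Stage 3: SOT continuity of $X\mapsto(1-X)^{-1}$ is not ``automatic'' (inversion is not SOT-continuous in general) but follows from the identity $(1-X)^{-1}-(1-Y)^{-1}=(1-X)^{-1}(X-Y)(1-Y)^{-1}$ together with the uniform bound $\|(1-X)^{-1}\|\le(1-r)^{-1}$, which is the content of the paper's lemma on differences of transfer functions; and your closing sentence about letting $r\to1^-$ is out of place, since the realization formula for $f(T)$ is only defined when $\|(D\otimes I)\Delta(T)\|<1$, which is the regime the theorem's proof actually treats.
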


The transfer function formula \eqref{tfragain} for $f$ makes sense for $z \in Ball(\ell^{\infty})$
and we have
\[
\lim_{N\to \infty} f_N(z) = f(z)
\]
for $z \in Ball(\ell^{\infty})$.  
Thus, the transfer function formula readily shows that Agler class functions
extend to $Ball(\ell^{\infty})$.
Our proof relies on a Montel theorem in infinite variables from \cite{Dirichletbook}
and does not use the intricate argument involving nets of
points in $Ball(c_0)$ as in Davie-Gamelin \cite{DG} for the Schur class case.

The transfer function formula \eqref{tfragain}
is essentially equivalent (via a standard argument) to an Agler decomposition 
\[
1- f(z)\overline{f(w)} = 
\sum_{j=1}^{\infty}(1-\bar{w}_j z_j) K_j(z,w).
\]
which we will show converges absolutely.  Again, the $K_j$ are positive semi-definite
kernels on $Ball(\ell^{\infty})\times Ball(\ell^{\infty})$.

\begin{remark} \label{DMremark}
The paper Dritschel-McCullough \cite{DM} discusses a version of the Agler class
in infinite variables via an approach to interpolation and realization formulas using
 \emph{test functions}. 
 Their definition of the Agler class in infinitely many variables
 is more expansive and allows for, for instance, 
 linear functionals on $\ell^{\infty}$ that annihilate $c_0$ and 
 do not have
Agler decompositions in the sense of Theorem \ref{mainthm}.
This expanded Agler class has 
a more general type of Agler decomposition.
See Proposition 5.4 of \cite{DM}. 
Here is a simplified version of an example they present.

Let $\alpha = (a_n)_{n=1}^{\infty}$ be an increasing sequence of
positive real numbers that converge to some $a \in (0,1)$;
e.g. $a_n = \frac{1}{2} - \frac{1}{n+1}$.
By the Hahn-Banach theorem, there exists a linear functional 
$L \in (\ell^{\infty})^*$ such that $L(\alpha) = a$ and $\|L\| = 1$.
We claim $L \notin \mathcal{A}_\infty$.  
If we had an Agler decomposition,
\[
1-L(z)\overline{L(w)} = \sum_{j=1}^{\infty}(1-z_j\bar{w}_j) K_j(z,w)
\]
with each $K_j$ positive semi-definite on $Ball(\ell^{\infty})$
and $\sum_{j=1}^{\infty} K_j(z,z)<\infty$,
then inserting different combinations $z,w \in \{0,\alpha\}$
we have
\[
1 = \sum_{j=1}^{\infty} K_j(0,0), \qquad 
1 = \sum_{j=1}^{\infty} K_j(\alpha,0), \qquad 
1-a^2 = \sum_{j=1}^{\infty} (1-a_j^2)K_j(\alpha,\alpha).
\]
By Cauchy-Schwarz, 
\[
1 \leq \sum_{j=1}^{\infty} |K_j(\alpha,0)| 
\leq \left(\sum_{j=1}^{\infty} K_j(0,0)\right)^{1/2} \left(\sum_{j=1}^{\infty} K_j(\alpha, \alpha)\right)^{1/2} 
\]
so that
\(
1\leq \sum_{j=1}^{\infty} K_j(\alpha, \alpha).
\)
On the other hand, we have
\[
(1-a^2)(1-\sum_{j=1}^{\infty} K_j(\alpha,\alpha)) = \sum_{j=1}^{\infty}(a^2-a_j^2)K_j(\alpha,\alpha) \geq 0
\]
and therefore this must equal zero which would imply $K_j(\alpha,\alpha) = 0$ for all $j$.
This is a contradiction.

Theorem \ref{mainthm} is adapted to functions
that have the added continuity that makes
them completely determined by their values
on $c_{00} \subset c_0$ and so our definition rules out
functions that are zero on all of $c_{00}$.  $\diamond$
\end{remark}

Some aspects of interpolation in $\mathcal{A}_\infty$
are straightforward, however one important aspect
has a subtlety.  

\begin{theorem} \label{Pickthm}
Let $X \subset Ball(\ell^{\infty})$ be a finite subset and
let $f:X \to \overline{\D}$ be a function.
Consider the following conditions.
\begin{enumerate} 
\item There exists $\tilde{f} \in \mathcal{A}_\infty$ with $\left.\tilde{f}\right|_{X} = f$.
\item There exist positive semi-definite functions $K_1, K_2, \dots$ on $X$
such that for $z,w \in X$
\[
1- f(z)\overline{f(w)} = \sum_{j=1}^{\infty} (1-z_j\bar{w}_j)K_j(z,w)
\]
and for all $z\in X$
\[
\sum_{j=1}^{\infty} K_j(z,z) < \infty.
\]
\item For every tuple $T=(T_1,T_2,\dots)$ of commuting, contractive, simultaneously
diagonalizable matrices whose joint eigenspaces have dimension at most 1 and satisfy
$\sigma(T) \subset X$, 
 we have $\|f(T)\|\leq 1$.
\end{enumerate}

Then, 
\begin{itemize}
\item (1) and (2) are equivalent and imply item (3).  
\item Item (3) implies (1) and (2) when $X \subset \HD$.
\end{itemize}
In particular, (1),(2), and (3) are equivalent when $X \subset \HD$.

\end{theorem}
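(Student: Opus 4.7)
The plan is to establish (1)$\Leftrightarrow$(2) and (1)/(2)$\Rightarrow$(3) in full generality, and then prove (3)$\Rightarrow$(2) under $X \subset \HD$ via Hahn-Banach separation in the finite-dimensional space of Hermitian matrices indexed by $X$. The $\HD$ hypothesis will play two distinct roles in that last step, which is where I expect the only real difficulty.

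I would get (1)$\Rightarrow$(2) by restricting the Agler decomposition on $Ball(\ell^{\infty})\times Ball(\ell^{\infty})$ provided by Theorem~\ref{mainthm} to $X\times X$; the conclusion of that theorem already delivers $\sum_j K_j(z,z)<\infty$. For (2)$\Rightarrow$(1) I would run the standard lurking-isometry argument: factor $K_j(z,w)=\langle h_j(w),h_j(z)\rangle_{\mathcal{H}_j}$, rearrange (2) as an isometry between finite-dimensional subspaces of $\C\oplus \bigoplus_j \mathcal{H}_j$, and extend by Parrott to a contractive $V=\begin{pmatrix}A & B \\ C & D\end{pmatrix}$; the transfer function formula then defines an extension $\tilde f$ on $Ball(\ell^{\infty})$, whose restriction to each $\D^N$ inherits an $N$-variable realization (set $z_j=0$ for $j>N$), placing $\tilde f$ in $\mathcal{A}_\infty$. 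For (2)$\Rightarrow$(3), I would expand $\langle (I-f(T)^*f(T))v,v\rangle$ in a joint eigenbasis $\{e_k\}$ with Gram matrix $G_{kl}=\langle e_k,e_l\rangle$ and eigenvalues $\lambda^{(k)}$; (2) rewrites it as $\sum_j\sum_{k,l} c_k\bar c_l\,(1-\lambda^{(k)}_j\overline{\lambda^{(l)}_j})\,K_j(\lambda^{(k)},\lambda^{(l)})\,G_{kl}$, and each $j$-summand is the Schur product of the PSD matrix $K_j$ with $G\circ(1-\lambda_j\bar\lambda_j)$---the latter PSD precisely because $T_j$ is a contraction---so the whole expression is nonnegative, with absolute convergence supplied by $\sum_j K_j(z,z)<\infty$ and Cauchy--Schwarz.

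For (3)$\Rightarrow$(2), I would work in the finite-dimensional real vector space of Hermitian matrices indexed by $X$ with the trace pairing, and consider the convex cone
\[
\mathcal{C}=\Big\{\sum_{j=1}^{\infty} B^{(j)}\circ K_j : K_j\text{ PSD on }X, \sum_j K_j(z,z)<\infty\text{ for } z\in X\Big\}, \quad B^{(j)}_{kl}:=1-\lambda^{(k)}_j\overline{\lambda^{(l)}_j}.
\]
My first use of $X\subset\HD$ is to show $\mathcal{C}$ is closed: since $\|z\|_\infty<1$ for $z\in\HD$, the bound $1-|z_j|^2\ge 1-\|z\|_\infty^2>0$ is uniform in $j$, which converts any uniform bound on $\sum_j(1-|z_j|^2)K_j^{(n)}(z,z)$ into a uniform bound on $\sum_j K_j^{(n)}(z,z)$; a diagonal extraction followed by Fatou then produces a limiting Agler decomposition. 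Supposing the Pick matrix $F_{kl}=1-f(\lambda^{(k)})\overline{f(\lambda^{(l)})}$ lies outside $\mathcal{C}$, Hahn--Banach yields a Hermitian $G$ with $\mathrm{tr}(GF)<0$ and $\mathrm{tr}(G(B^{(j)}\circ K))\ge 0$ for every $j$ and every PSD $K$. Unwinding the identity $\mathrm{tr}(G(B\circ K))=\mathrm{tr}((\bar G\circ B)\bar K)$ together with self-duality of the PSD cone translates this into $\bar G\circ B^{(j)}\ge 0$ for every $j$.

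The decisive second use of $X\subset\HD$ is an averaging argument. The matrix
\[
\frac{1}{N}\sum_{j=1}^N \bar G\circ B^{(j)}=\bar G\circ\Big(1-\frac{1}{N}\sum_{j=1}^N \lambda^{(k)}_j\overline{\lambda^{(l)}_j}\Big)
\]
is PSD for each $N$; because each $\lambda^{(k)}\in\ell^2$ the inner sum $\sum_j \lambda^{(k)}_j\overline{\lambda^{(l)}_j}$ converges absolutely by Cauchy--Schwarz, forcing its running average to $0$, so the left side tends to $\bar G$ and closedness of the PSD cone yields $\bar G\ge 0$ (equivalently $G\ge 0$). I can now realize $\bar G$ as the Gram matrix of vectors $\{e_k\}$ and set $T_j e_k=\lambda^{(k)}_j e_k$: these operators commute trivially, each is contractive by $\bar G\circ B^{(j)}\ge 0$, and their joint spectrum is $X$. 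Applying hypothesis (3) to this tuple gives $\|f(T)\|\le 1$, equivalently $\bar G\circ F\ge 0$; pairing this PSD matrix against the all-ones vector produces $\sum_{k,l}\bar G_{kl}F_{kl}\ge 0$, and a short manipulation using Hermiticity of $G$ and $F$ identifies this quantity with $\mathrm{tr}(GF)$, contradicting $\mathrm{tr}(GF)<0$. The genuine obstacle concentrates in this averaging step---outside $\HD$ the running averages of $\lambda^{(k)}_j\overline{\lambda^{(l)}_j}$ need not vanish, the separating $G$ can fail to be PSD, and the equivalence really is expected to break, consistent with the counterexample discussed around Remark~\ref{DMremark}.
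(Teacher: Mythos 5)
Your overall architecture is the same as the paper's: (1)$\Leftrightarrow$(2) by restricting the global Agler decomposition and running a lurking isometry, and (3)$\Rightarrow$(2) by separating the Pick matrix from a closed cone of "partial Agler decompositions" on $X\times X$. Two of your variations are correct and worth noting: your direct Schur-product proof of (2)$\Rightarrow$(3) is cleaner than the paper's route (the paper proves (1)$\Rightarrow$(3) by evaluating $\tilde f(rT)$ and letting $r\nearrow 1$), and your Ces\`aro-averaging proof that the separating functional is positive semi-definite is a valid alternative to the paper's trick of testing against $C=(1-Z_1\otimes\bar Z_1)\,\frac{a\otimes\bar a}{1-Z_1\otimes\bar Z_1}=a\otimes\bar a\in\mathcal{C}$ (note the paper's version needs no hypothesis beyond $X\subset Ball(\ell^\infty)$, whereas yours uses $X\subset c_0$).

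The genuine gap is in the closedness of the cone $\mathcal{C}$, which is precisely the paper's main new lemma for this theorem. First, the mechanism you cite for the role of $\HD$ there is not the right one: the bound $1-|z_j|^2\ge 1-\|z\|_\infty^2>0$ holds uniformly for any finite $X\subset Ball(\ell^\infty)$, so while it does give the uniform bound on $\sum_j K_j^{(n)}(z,z)$ and hence the diagonal extraction, it cannot be where square-summability enters. Second, "diagonal extraction followed by Fatou" does not produce the limiting decomposition. After extracting $K_j^{(n)}\to K_j$ for each $j$, you must show $C-\sum_{j=1}^{N}B^{(j)}\circ K_j\succeq -\epsilon_N\,\delta$ with $\epsilon_N\to 0$, i.e.\ you must bound the tails $\sum_{j>N}B^{(j)}\circ K_j^{(n)}$ from below in the PSD order \emph{uniformly in $n$}. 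Fatou gives nothing here: the individual summands $B^{(j)}\circ K_j^{(n)}$ are not positive semi-definite (the matrix $\bigl[1-z_j\bar w_j\bigr]$ is not PSD), so there is no monotone or nonnegative structure to pass to the limit, and on the off-diagonal entries there is no inequality to preserve at all. The fix is the paper's: write the tail as $\sum_{j>N}K_j^{(n)}-\sum_{j>N}(Z_j\otimes\bar Z_j)\circ K_j^{(n)}$, observe the first sum is PSD, and dominate the second by $c\,\bigl(\max_{z\in X}\sum_{j>N}|z_j|^2\bigr)\,\delta$ using the uniform bound on the $K_j^{(n)}$. It is the factor $\max_{z\in X}\sum_{j>N}|z_j|^2\to 0$ that requires $X\subset\ell^2$; without it, mass can escape to $j=\infty$ through the subtracted term and the limit need not lie in $\mathcal{C}$. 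Finally, even with this estimate the limit is $\sum_j B^{(j)}\circ K_j$ plus a PSD remainder $A_0$, which must then be absorbed into one of the $K_j$ by Schur-multiplying by $\frac{1}{1-Z_1\otimes\bar Z_1}$; your sketch omits this absorption step as well.
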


The subtlety alluded to above is that we only
obtain a full generalization of an Agler-Pick interpolation theorem
when our interpolation points lie in $\HD$ and we do not 
know to what extent this condition can be removed.

\begin{remark}
Going back to Remark \ref{DMremark} and the example discussed there, 
the interpolation
problem $0 \in \ell^{\infty} \mapsto 0\in \C, \alpha \mapsto a$ cannot be solved
within $\mathcal{A}_{\infty}$,
however, the associated function $f: \{0,\alpha\} \to \{0,a\}$, $f(0)=0$, $f(\alpha) = a$,
satisfies the condition of item (3) above.
Indeed, if we have commuting simultaneously diagonalizable 
contractive matrices $T_j$
with $\sigma(T_j) \subset \{0,a_j\}$, then $f(T_1,T_2,\dots)$
will be contractive by continuity. Specifically, if $\vec{b}_0$
is the eigenvector for $0$ and $\vec{b}_1$ the eigenvector
for $\alpha$, then contractivity of $T_j$ means
\[
|T_j(\sum_{k=0,1} c_k \vec{b}_k)| \leq |\sum_{k=0,1} c_k \vec{b}_k|
\]
for arbitrary $c_0,c_1\in \C$.
But $T_j(\sum_{k=0,1} c_k \vec{b}_k) = c_1 a_j \vec{b}_1$
and sending $j\to \infty$ we get 
\[
|f(T) (\sum_{k=0,1} c_k \vec{b}_k)| \leq |\sum_{k=0,1} c_k \vec{b}_k|. 
\]

It seems that a complete relaxation of the condition $X\subset \HD$
to $X \subset Ball(\ell^{\infty})$ would lead to the broader notion of Agler 
class constructed with the test function approach of \cite{DM}. 
It would be interesting if the condition $X\subset \HD$ could be
relaxed to $X\subset Ball(c_0)$ with a valid interpolation
theorem in $\mathcal{A}_\infty$.
$\diamond$
\end{remark}

Referring to Remark \ref{Dirichletremark},
it is of interest to understand the image of the map
\[
F \in \mathcal{A}_{\infty} \mapsto f(s) = F(p_1^{-s}, p_2^{-s},\dots) \in \mathscr{H}^{\infty}
\]
into the space $\mathscr{H}^{\infty}$ of convergent and bounded Dirichlet series 
in the right half plane in $\C$.  
We shall let $\mathscr{A}^{\infty}$ denote the image of the above map.
(We caution that as we have defined things
the functions in $\mathscr{A}^{\infty}$ are bounded
by $1$ whereas $\mathscr{H}^{\infty}$ is a Banach space
of functions normed by supremum norm.)
The following is basically a formality but worth pointing out.
Let $\C_+ = \{z\in \C: \Re z >0\}$ denote the right half plane.

\begin{theorem} \label{Dirichletthm}
Let $f \in \mathscr{H}^{\infty}$.
The following are equivalent:
\begin{enumerate}
\item $f\in \mathscr{A}^{\infty}$
\item There exist positive semi-definite kernels $K_1, K_2,\dots$ on $\C_+$
such that 
\[
1- f(s) \overline{f(w)} = \sum_{j=1}^{\infty} (1-p_j^{-(s+\bar{w})})K_j(s,w)
\]
and $\sum_{j=1}^{\infty} K_j(s,s)<\infty$ for each $s\in \C_+$.
\item For every diagonalizable matrix $M$ with $1$ dimensional eigenspaces,
 with $\sigma(M) \subset \C_+$,
and with the property $\|n^{-M}\|\leq 1$ for all $n \in \nat$, we have
\[
\|f(M)\| \leq 1.
\]
\end{enumerate}
\end{theorem}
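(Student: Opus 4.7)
The plan is to translate Theorems \ref{mainthm} and \ref{Pickthm} across the Bohr correspondence via the analytic map $B : \C_+ \to Ball(\ell^{\infty})$ given by $B(s) = (p_1^{-s}, p_2^{-s}, \ldots)$. Since $|p_j^{-s}| = p_j^{-\Re s} < 1$ on $\C_+$, one has $B(\C_+) \subset Ball(\ell^{\infty})$, and $B(s) \in \HD$ precisely when $\sum_j p_j^{-2\Re s}<\infty$, i.e.\ $\Re s > 1/2$. The substitution $z = B(s)$, $w = B(u)$ intertwines the polydisk operator $\Delta(z) = \sum_j z_j P_j$ with the Dirichlet operator $\Delta(s) = \sum_j p_j^{-s} P_j$, and this is the basic identity driving all three equivalences.

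For $(1) \Rightarrow (2)$, substituting $z = B(s)$, $w = B(u)$ in the Agler decomposition furnished by Theorem \ref{mainthm} yields (2) with kernels $K_j(s,u) := K_j^F(B(s), B(u))$. For $(2) \Rightarrow (1)$, I would apply the standard lurking-isometry construction to the decomposition in (2), producing a contractive block operator $V = \begin{pmatrix} A & B \\ C & D \end{pmatrix}$ realizing $f(s) = A + B\Delta(s)(I - D\Delta(s))^{-1} C$; replacing $p_j^{-s}$ by $z_j$ defines $F(z) := A + B\Delta(z)(I - D\Delta(z))^{-1} C$ on $Ball(\ell^{\infty})$, which has a standard realization on each $\D^N$ (so $F|_{\D^N} \in \mathcal{A}_N$) and satisfies $F \circ B = f$, making $F$ the Bohr correspondent of $f$ and placing $f$ in $\mathscr{A}^{\infty}$.

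For $(1) \Rightarrow (3)$, given $M$ as in (3), set $T_j := p_j^{-M}$. These commute as functions of a single matrix, are contractive by hypothesis (the case $n = p_j$), and $\sup_j \|T_j\| \leq 1$, so Theorem \ref{mainthm} supplies $f(M) := F(T)$ with $\|F(T)\| \leq 1$ via the realization; checking on each common eigenvector of $M$ shows this agrees with the spectral definition of $f(M)$.

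The delicate direction is $(3) \Rightarrow (1)$, which I would argue via Theorem \ref{Pickthm}. For any finite $Y \subset \{s \in \C_+ : \Re s > 1/2\}$, put $X := B(Y) \subset \HD$ and verify condition (3) of Theorem \ref{Pickthm} for $F|_X$: any admissible tuple $T$ with $\sigma(T) \subset X$ has joint eigenvalues of the form $B(\mu)$, $\mu \in Y$, and defining $M$ diagonally by assigning $\mu$ to the joint eigenvector of eigenvalue $B(\mu)$ gives $T_j = p_j^{-M}$; then $n^{-M} = \prod_j T_j^{a_j}$ (for $n = \prod p_j^{a_j}$) is a product of commuting contractions, so $\|n^{-M}\| \leq 1$, and (3) yields $\|f(M)\| = \|F(T)\| \leq 1$. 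Theorem \ref{Pickthm} then provides an interpolant $\tilde F_Y \in \mathcal{A}_\infty$ agreeing with $F$ on $B(Y)$. Letting $Y$ exhaust a countable dense subset of $\{\Re s > 1/2\}$ and invoking the Montel-type compactness that underlies Theorem \ref{mainthm}, a subnet $\tilde F_{Y_\alpha}$ converges pointwise to some $\tilde F \in \mathcal{A}_\infty$ with $\tilde F \circ B = f$ on a dense set, hence on all of $\{\Re s > 1/2\}$ by continuity; injectivity of the Bohr correspondence, together with analytic continuation of bounded Dirichlet series, then forces $\tilde F = F$, so $F \in \mathcal{A}_\infty$. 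The main obstacle is precisely this final compactness/uniqueness step: one must confirm that $\mathcal{A}_\infty$ is closed under the relevant pointwise limits and that no other member of $\mathcal{A}_\infty$ distinct from $F$ could agree with it on the sparse set $B(\{\Re s > 1/2\}) \subset Ball(\ell^{\infty})$.
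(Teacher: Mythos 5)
Your proposal is correct and follows essentially the same route as the paper: Bohr substitution for the equivalence of (1) and (2), the assignment $T_j = p_j^{-M}$ together with the prime-factorization trick to pass between condition (3) here and condition (3) of Theorem \ref{Pickthm}, restriction to $\{\Re s > 1/2\}$ so that the interpolation nodes land in $\HD$, and a Montel limit of the finite interpolants. The two obstacles you flag at the end dissolve exactly as the paper handles them: membership in $\mathcal{A}_{\infty}$ is preserved under locally uniform limits because it is tested on finite subsets of each $\D^N$, and no uniqueness of the lift is needed---one only requires \emph{some} $G \in \mathcal{A}_{\infty}$ with $G\circ\pi = f$, which follows since $G\circ\pi$ and $f$ are holomorphic and agree on a set with a limit point in $\{\Re s>1/2\}$.
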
 

Again, $p_1,p_2,\dots$ are the prime numbers.
It would be interesting if the matrices $M$ in item (3)
had a simpler description.  Functions in $\mathscr{A}^{\infty}$
satisfy a special von Neumann inequality.

\begin{theorem} \label{DirichletvNthm}
Let $f \in \mathscr{A}^{\infty}$.  
Suppose $M$ is a bounded operator on a Hilbert space
such that $\sigma(M) \subset \C_{+}$ and $\|n^{-M}\| \leq 1$ for every $n \in \nat$.
Then, 
\[
\|f(M)\| \leq 1
\]
where $n^{-M}$ and $f(M)$ are defined using the Riesz holomorphic functional calculus.
\end{theorem}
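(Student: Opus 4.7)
The plan is to transport the question from the Dirichlet side to the $\mathcal{A}_{\infty}$ side via the Bohr correspondence and then invoke Theorem~\ref{mainthm}. By hypothesis there exists $F \in \mathcal{A}_{\infty}$ with $f(s) = F(p_1^{-s}, p_2^{-s}, \dots)$ for $s \in \C_+$. Set $T_j := p_j^{-M}$ using the Riesz holomorphic functional calculus; the $T_j$ pairwise commute since each is a holomorphic function of the single operator $M$, and each satisfies $\|T_j\|\leq 1$ by the hypothesis on $M$. Hence $T := (T_1,T_2,\dots)$ is a tuple of commuting contractions on the Hilbert space $\mathcal{K}$ on which $M$ acts, and Theorem~\ref{mainthm} produces the operator $F(T)$ on $\mathcal{K}$ as the SOT limit of $F_N(T_1,\dots,T_N)$, where $F_N(z) := F(z_1,\dots,z_N,0,\dots) \in \mathcal{A}_N$. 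Each $F_N(T_1,\dots,T_N)$ is an Agler evaluation on a commuting contractive $N$-tuple, so it has norm at most one, and thus $\|F(T)\|\leq 1$ as an SOT limit of contractions.

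The key step is to identify $F_N(T_1,\dots,T_N)$ with $f_N(M)$, where $f_N(s):= F_N(p_1^{-s},\dots,p_N^{-s})$ and $f_N(M)$ is defined by Riesz calculus. Since $\sigma(M)$ is a compact subset of $\C_+$, there is a $\delta>0$ with $\Re\lambda \geq \delta$ for every $\lambda\in\sigma(M)$, so the joint spectrum of $(T_1,\dots,T_N)$ lies in $\prod_{j=1}^{N}\{|z_j|\leq p_j^{-\delta}\}$, a compact subset of the open polydisk $\D^N$. The composition rule for the multivariable holomorphic functional calculus then yields $f_N(M) = (F_N\circ\phi_N)(M) = F_N(\phi_N(M)) = F_N(T_1,\dots,T_N)$, where $\phi_N(s) = (p_1^{-s},\dots,p_N^{-s})$. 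The one point needing justification is that for commuting contractions with joint spectrum strictly inside $\D^N$, the Agler transfer-function evaluation of $F_N \in \mathcal{A}_N$ coincides with the multivariable holomorphic functional calculus; this holds because the two assignments agree on polynomials and are each continuous under uniform convergence on a neighborhood of the joint spectrum, and the Taylor polynomials of $F_N$ converge uniformly on any polydisk strictly inside $\D^N$.

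Finally I let $N\to\infty$. For $\Re s>0$ the truncations $(p_1^{-s},\dots,p_N^{-s},0,\dots)$ converge in $\ell^\infty$-norm to $(p_j^{-s})_{j\geq 1}$, and the transfer-function formula \eqref{tfragain} shows that $F$ is continuous on $Ball(\ell^\infty)$; thus $f_N(s)\to f(s)$ pointwise on $\C_+$. Combined with the uniform bound $|f_N|,|f|\leq 1$, Montel/Vitali gives uniform convergence on compact subsets of $\C_+$, in particular on a neighborhood of $\sigma(M)$, and Riesz calculus then yields $f_N(M)\to f(M)$ in operator norm. Since $F_N(T_1,\dots,T_N)\to F(T)$ in SOT and equals $f_N(M)$, we conclude $f(M)=F(T)$ and hence $\|f(M)\|\leq 1$. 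The main obstacle is the identification of the two functional calculi for $F_N$ highlighted above; once this bridge between the operator-theoretic and the complex-analytic viewpoints is in place, the rest is the Bohr substitution combined with Montel and a Riesz-calculus limit.
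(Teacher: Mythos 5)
Your proposal is correct and follows essentially the same route as the paper: Bohr-lift $f$ to $F\in\mathcal{A}_{\infty}$, truncate to $F_N$, identify $f_N(M)$ with $F_N(p_1^{-M},\dots,p_N^{-M})$ so that $F_N\in\mathcal{A}_N$ gives $\|f_N(M)\|\leq 1$, and pass to the limit using uniform convergence of $f_N\to f$ on a neighborhood of $\sigma(M)$ together with continuity of the Riesz calculus. The only differences are minor: the paper justifies the key identification via absolute convergence of the power series of $F_N$ on $\D^N$ and of the Dirichlet series of $f_N$ on $\C_+$ rather than your Taylor-polynomial approximation (both work, since $\sigma(M)\subset\{\Re s\geq\delta\}$ yields $\|n^{-M}\|\leq Cn^{-\delta/2}$), and your detour through $F(T)$ and the SOT limit from Theorem \ref{mainthm} is redundant once $\|f_N(M)\|\leq 1$ and $f_N(M)\to f(M)$ in norm are established.
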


The table of contents describes the rest of the paper.

\tableofcontents

\section{Notations and background} \label{sec:notations}

Several spaces of sequences will be of interest.
\begin{itemize}
\item $\nat = \{1,2,\dots\}$.
\item $\ell^{\infty} = \ell^{\infty}(\nat) = \{ z = (z_j)_{j\in \nat} \in \C^{\nat}: \|z\|_{\infty} := \sup_j |z_j| <\infty \}$
\item $\ell^2 = \ell^2(\nat) = \{ z= (z_j)_{j\in \nat} : \sum_{j=1}^{\infty} |z_j|^2 <\infty\}$
\item $c_0 = c_0(\nat) = \{z\in \ell^{\infty}: \lim_{j\to\infty} z_j = 0\}$.
\item $c_{00} = c_{00}(\nat) = \{ z \in \ell^{\infty}: \exists N \in \nat, z_j = 0 \text{ for } j>N\}$. 
\item $Ball(\ell^{\infty}) = \{z\in \ell^{\infty}: \|z\|_{\infty} <1\}$ denotes the open unit ball of $\ell^{\infty}$.
\item $\D^{\infty} = \D^{\nat} = \{z \in \ell^{\infty}: \forall j, |z_j| <1\}$.
\item $Ball(c_0) = \{z\in c_0: \|z\|_{\infty} <1\}$.
\item $Ball(c_{00}) = \{z\in c_{00}: \|z\|_{\infty} < 1\}$.
\item We identify $\D^{N}$ with $\D^N \times \{(0,0,\dots)\} \subset Ball(c_{00})$.
\item The Hilbert multidisk is the set $\D_{2}^{\infty} := \ell^2 \cap Ball(\ell^\infty)$; namely,
the set of sequences $(z_j)_{j\in \nat}$ such that $\sup_j |z_j|<1$ and $\sum_{j} |z_j|^2 <\infty$.
Note that for $z,w \in \D_2^{\infty}$ the infinite product
\[
\prod_{j=1}^{\infty} \frac{1}{1-\bar{w}_j z_j}
\]
converges absolutely.
\item We generally use standard modulus bars $|\cdot|$ for the modulus of complex numbers or vectors (in $\C^N$ or
Hilbert space)
while double bars $\|\cdot\|$ are reserved for operator norms or other norms as listed above. 
\end{itemize}

\begin{remark} \label{psd}
We use the basics of positive semi-definite functions.
Given a set $X$, a function $A:X\times X \to \C$ is positive semi-definite on $X$ if
for every finite subset $Y\subset X$ and every function $a:Y\to \C$ we have
\[
\sum_{z,w \in Y} a(z)\overline{a(w)} A(z,w) \geq 0.
\]
We write $A(z,w) \succeq 0$ in this case.  More generally, we write $A(z,w) \succeq B(z,w)$ if
$A - B \succeq 0$.
We frequently use the Schur product theorem which say that if $A(z,w), B(z,w) \succeq 0$
then $A(z,w)B(z,w) \succeq 0$.
For a function $f:X\to \C$, we let $f\otimes \bar{f}$ denote the function $(z,w) \mapsto f(z)\overline{f(w)}$.
Also, if $X \subset \C^{\nat}$, we let $Z_j \otimes \bar{Z}_j$ denote the function $(z,w) \mapsto z_j\bar{w}_j$.
\end{remark}

\section{Proof of Theorem \ref{mainthm}}

Theorem \ref{mainthm} will be proven with three lemmas.  
The first lemma is our main advance while the other two are
standard.

For the first lemma, 
let $\rho = (\rho_n)_{n\in \nat} \in \HD$ be a fixed square summable sequence
of positive numbers.
Let
$\overline{\D}_{\rho} = \{z =(z_n)_{n\in \nat}: |z_n|\leq \rho_n\}$
and
let $\D_{\rho} = \{z \odot \rho: z \in Ball(c_0)\}$
where
$\rho \odot z = (\rho_jz_j)_{j\in \nat}$.
(The notation is not entirely consistent but it is temporary.)

\begin{lemma} \label{AghasAD}
Assume $f: Ball(c_{00})\to \C$ is in the Agler class, $\mathcal{A}_{\infty}$.

Then, for $j=0,1,2,\dots$, there exist positive semi-definite kernels $K_j$
on $\D_{\rho}$ such that
\[
1- f(z)\overline{f(w)} = K_0(z,w) +
\sum_{j=1}^{\infty}(1-z_j\bar{w}_j) K_j(z,w)
\]
where the sum converges absolutely.
\end{lemma}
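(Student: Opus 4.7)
The plan is to pass to the limit in the finite-variable Agler decompositions of the restrictions $f_N:=f|_{\D^N}\in\mathcal{A}_N$, using a ``centering'' trick (replacing $(1-z_j\bar w_j)$ by $(1-\rho_j^2)+(\rho_j^2-z_j\bar w_j)$) to isolate a tame ``fluctuation'' piece and absorb the recalcitrant part of the limit into the extra kernel $K_0$.

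For each $N$, the finite-variable Agler theorem yields psd kernels $K_1^N,\dots,K_N^N$ on $\D^N$ with $1-f_N(z)\overline{f_N(w)}=\sum_{j=1}^N(1-z_j\bar w_j)K_j^N(z,w)$. Identifying $z\in\D_\rho$ with its truncation $z^{(N)}\in\D^N$ and using $|z_j|\le\rho_j\le\rho_*:=\|\rho\|_\infty<1$, the diagonal identity forces
\[
(1-\rho_*^2)\sum_{j=1}^N K_j^N(z,z)\le 1-|f_N(z^{(N)})|^2\le 1,
\]
so $\sum_j K_j^N(z,z)\le(1-\rho_*^2)^{-1}$ and, by Cauchy--Schwarz for psd kernels, $\sum_j|K_j^N(z,w)|\le (1-\rho_*^2)^{-1}$ uniformly in $N$ for all $z,w\in\D_\rho$. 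Since $\D_\rho\subset\HD$, the absolute convergence of the Taylor series of $f$ on $\HD$ (Remark \ref{absremark}) also furnishes a well-defined extension $f(z)=\lim_N f_N(z^{(N)})$ on $\D_\rho$.

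Next, I invoke a Montel-type compactness theorem for bounded holomorphic families in infinitely many variables from \cite{Dirichletbook}, together with a diagonalization in $j$, to pass to a subsequence $N_k$ along which each $K_j^{N_k}(z,w)\to K_j(z,w)$ pointwise on $\D_\rho\times\D_\rho$ and along which the psd partial sums $S_k(z,w):=\sum_{j=1}^{N_k}(1-\rho_j^2)K_j^{N_k}(z,w)$, uniformly bounded by $1$ on the diagonal, converge pointwise to a psd kernel $L(z,w)$. Each $K_j$ is psd, Fatou gives $\sum_j K_j(z,z)\le(1-\rho_*^2)^{-1}$, and hence $\sum_j(1-z_j\bar w_j)K_j(z,w)$ is absolutely convergent. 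Writing $1-z_j\bar w_j=(1-\rho_j^2)+(\rho_j^2-z_j\bar w_j)$ in each term of the finite decomposition, the ``centered'' sum collects into $S_k\to L$, while in the ``fluctuation'' sum $|\rho_j^2-z_j\bar w_j|\le 2\rho_j^2$ and $|K_j^{N_k}(z,w)|\le(1-\rho_j^2)^{-1}$ produce the summable majorant $2\rho_j^2/(1-\rho_*^2)$ (since $\rho\in\ell^2$), so dominated convergence gives
\[
\sum_{j=1}^{N_k}(\rho_j^2-z_j\bar w_j)K_j^{N_k}(z,w)\longrightarrow \sum_{j=1}^\infty(\rho_j^2-z_j\bar w_j)K_j(z,w).
\]
Defining $K_0(z,w):=L(z,w)-\sum_{j=1}^\infty(1-\rho_j^2)K_j(z,w)$ (absolutely convergent by Cauchy--Schwarz) and regrouping yields the desired identity $1-f(z)\overline{f(w)}=K_0(z,w)+\sum_{j=1}^\infty(1-z_j\bar w_j)K_j(z,w)$.

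Positivity of $K_0$ is then checked by a monotone-type argument: for each finite $M$,
\[
L(z,w)-\sum_{j=1}^M(1-\rho_j^2)K_j(z,w)=\lim_{k\to\infty}\sum_{j=M+1}^{N_k}(1-\rho_j^2)K_j^{N_k}(z,w)
\]
is a pointwise limit of psd kernels (each summand $(1-\rho_j^2)K_j^{N_k}$ is psd), hence psd, and letting $M\to\infty$ realizes $K_0$ as a pointwise limit of psd kernels. The main obstacle throughout is the compactness extraction in step two: producing a \emph{single} subsequence $N_k$ along which all countably many $K_j^{N_k}$, the partial sums $S_k$, and the values $f_{N_k}(z^{(N_k)})$ converge pointwise on the uncountable set $\D_\rho\times\D_\rho$. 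This is exactly where the Montel theorem in infinite variables from \cite{Dirichletbook} is essential; once the subsequence is in hand, the centering identity reduces the rest to a dominated-convergence bookkeeping.
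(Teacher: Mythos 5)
Your proposal is correct, and its skeleton is the same as the paper's: restrict to $\D^N$ to obtain finite Agler decompositions, derive a uniform bound on $\sum_j K_j^N$ over $\D_\rho$, extract one subsequence for all $j$ via the Montel theorem of \cite{Dirichletbook} plus diagonalization, and use $\rho\in\ell^2$ to control the tail and show the residual $K_0$ is positive semi-definite. You diverge in two places, both in the bookkeeping rather than the strategy. First, your uniform bound $\sum_j K_j^N(z,z)\le(1-\sup_k\rho_k^2)^{-1}$ is read off directly from the diagonal of the decomposition; the paper instead multiplies by the kernel $S(z,w)=\prod_j(1-\bar w_jz_j)^{-1}$ and uses the psd inequality $S\succeq\sum_j K_j^N$. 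Your bound is more elementary and works because $\D_\rho$ stays uniformly inside the polydisk, though the paper's $S$-based bound is what generalizes to all of $\HD$ (and reappears in Lemma \ref{closedlemma}). Second, for $K_0\succeq 0$ the paper bounds the non-psd tail $\sum_{j>N}(Z_j\otimes\bar Z_j)S$ directly as a quadratic form using $\sum_{j>N}\rho_j^2\to 0$, whereas your centering $1-z_j\bar w_j=(1-\rho_j^2)+(\rho_j^2-z_j\bar w_j)$ isolates a manifestly psd centered sum $S_k\to L$ plus a dominatedly convergent fluctuation, so $K_0=L-\sum_j(1-\rho_j^2)K_j$ is psd as a limit of psd tails; this is arguably cleaner, at the cost of one extra Montel extraction for the sequence $(S_k)$. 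Both tail arguments ultimately rest on square-summability of $\rho$. One point to state explicitly in your write-up (the paper is also terse here): $f$ is only given on $Ball(c_{00})$, so $f(z)$ for $z\in\D_\rho$ must be interpreted via the extension to $Ball(c_0)$, or as you do via the absolutely convergent Taylor series on $\HD$, together with $f_N\to f$ pointwise on $\D_\rho$.
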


Note that we have introduced
the term $K_0$ which is for convenience in the proof.
This term can be absorbed into any of the other terms
for instance as 
\[
(1 - z_1 \bar{w}_1)\left( \frac{K_0(z,w)}{1-z_1\bar{w}_1} + K_1(z,w)\right).
\]
Since $K_0$ will be positive semi-definite and since $\frac{1}{1-z_1\bar{w}_1}$ is
positive semi-definite, the product is positive semi-definite.

\begin{remark}\label{montel}
In the proof, we will use the Montel theorem given in \cite{Dirichletbook} (Theorem 2.17).
It states that for a separable normed vector space $X$, if we are given 
a sequence $(D_n)_{n\in \nat}$ of $D_n \in H^\infty(Ball(X))$ with uniformly bounded
supremum norms, say $\|D_n\|_{\infty} \leq 1$, then there exists
a subsequence $(D_{n_j})_{j\in \nat}$ that converges uniformly on compact subsets 
of $Ball(X)$ to some $D \in H^{\infty}(Ball(X))$ necessarily with $\|D\|_{\infty} \leq 1$.
We will apply this to $X=c_0$ using compact sets of the
form $\overline{\D}_{\rho}$ defined above. $\diamond$
\end{remark}

\begin{proof}[Proof of Lemma \ref{AghasAD}]
For each $N$, let $f_N(z_1,z_2,\dots) = f(z_1,\dots,z_N, 0,\dots)$.
Since $f$ restricted to $\D^N$ has an Agler decomposition,
we can write
\[
1-f_N(z)\overline{f_N(w)} = \sum_{j=1}^{N}(1-\bar{w}_j z_j) K^N_j(z,w)
\]
for positive semi-definite kernels $K_j^N$ that only depend
on the first $N$ variables $z_1,\dots, z_N,w_1,\dots, w_N$, 
are analytic in $z$, and are anti-analytic in $w$.  

Note that for $z,w\in \HD$, the product
\[
S(z,w) = \prod_{j=1}^{\infty} \frac{1}{1-\bar{w}_j z_j}
\]
converges and is positive semi-definite.
Note that
\[
S_n(z,w) = (1-\bar{w}_n z_n) S(z,w) = \prod_{j\ne n} \frac{1}{1-\bar{w}_j z_j}
\]
is also positive semi-definite
and $S, S_n \succeq 1$ using the partial order from Remark \ref{psd}.
Here `$1$' is the identically $1$ function.

Let $f_N\otimes \overline{f_N}$ denote the function
$(z,w) \mapsto f_N(z)\overline{f_N(w)}$.
We have the positive semi-definite function inequality
\begin{equation} \label{psdineq}
S \succeq (1-f_N\otimes \overline{f_N})S = \sum_{j=1}^{N} K_j^{N} S_j
\succeq \sum_{j=1}^{N} K_j^{N} \succeq \sum_{j=1}^{M} K_j^N
\end{equation}
for $M<N$
and by Cauchy-Schwarz we have for $z,w\in \HD$
\[
S(z,z)^{1/2}S(w,w)^{1/2} \geq |K_j^N(z,w)|.
\]
Recall $\rho \odot z = (\rho_jz_j)_{j\in \nat}$.
For $z,w \in Ball(c_0)$, $K_j^N(\rho\odot z, \rho \odot \bar{w})$
is bounded and analytic in $Ball(c_0)\times Ball(c_0)$ with supremum norm
bounded by $S(\rho,\rho)$.  By the Montel
theorem (see Remark \ref{montel}), 
for each $j=1,2,\dots$ in succession
 there is a subsequence of $N\in \nat$ 
such that 
\begin{equation} \label{converge} 
K_j^N(\rho\odot z, \rho \odot \bar{w}) \overset{N\to\infty}{\to} K_j(\rho\odot z, \rho \odot \bar{w})
\text{ uniformly on compact subsets of } Ball(c_0).
\end{equation}
The limiting functions, $K_j$, are necessarily positive semi-definite because this property
is preserved under limits.
By a standard diagonal argument, we can find a common subsequence of $N$
such that for all $j$, \eqref{converge} holds.
Recall $\D_{\rho} = \{z \odot \rho: z \in Ball(c_0)\}$.
By \eqref{psdineq}, for $z,w\in \D_{\rho}$, 
\(
S\succeq \sum_{j=1}^{M} K_j^N
\)
for $M<N$ and sending $N \to \infty$ we have
$S \succeq \sum_{j=1}^{M} K_j$.
Finally, we can send $M\to \infty$ to obtain
\(
S \succeq \sum_{j=1}^{\infty} K_j
\)
with absolute convergence.
Absolute convergence can be proven by looking at the diagonal $z=w$ first and then
applying Cauchy-Schwarz.
To finish the proof, we show
\[
K_0(z,w) := 1-f(z)\overline{f(w)} - \sum_{j=1}^{\infty}(1-z_j \bar{w}_j)K_j(z,w)
\]
is positive semi-definite.  Here we emphasize that $f$ on $\D_{\rho}$ is defined 
via $f$'s holomorphic extension from $Ball(c_{00})$ to $Ball(c_0)$ as in the
traditional Schur class of infinitely many variables.

Now, for $N<M$
\[
1-f_M(z)\overline{f_M(w)} - \sum_{j=1}^{N}(1-z_j \bar{w}_j)K_j^{M}(z,w)
= 
\sum_{j=N+1}^{M}(1-z_j\bar{w}_j)K_j^{M}(z,w)\\
\]
Let $Z_j \otimes \overline{Z_j}$ denote the function $(z,w) \mapsto z_j\bar{w}_j$.
Since $S \succeq K_j^{M}$ we see that
\[
1 - f_M\otimes \overline{f_M} - \sum_{j=1}^{N} (1 - Z_j \otimes \overline{Z_j}) K_j^M
\succeq - \sum_{j=N+1}^{M} (Z_j \otimes \overline{Z_j}) S.
\]
This inequality means that for any finite subset $Y \subset \D_{\rho}$
and any function $a:Y \to \C$
\[
\sum_{z,w \in Y} (1-f_M(z)\overline{f_M(w)} - \sum_{j=1}^{N}(1-z_j \bar{w}_j)K_j^{M}(z,w))a(z) \overline{a(w)}
\geq
-\sum_{z,w\in Y} \sum_{j=N+1}^{M} z_j\bar{w}_j S(z,w) a(z) \overline{a(w)}.
\]
(See Remark \ref{psd}.)
Now,
\[
\sum_{z,w\in Y} \sum_{j=N+1}^{M} z_j\bar{w}_j S(z,w) a(z) \overline{a(w)}
\leq
S(\rho,\rho) \sum_{j=N+1}^{M}\rho_j^2 \left(\sum_{z\in Y} |a(z)|\right)^2
\leq
S(\rho,\rho) \sum_{j=N+1}^{\infty}\rho_j^2 \left(\sum_{z\in Y} |a(z)|\right)^2.
\]
Sending $M\to \infty$
\[
\sum_{z,w \in Y} (1-f(z)\overline{f(w)} - \sum_{j=1}^{N}(1-z_j \bar{w}_j)K_j(z,w))a(z) \overline{a(w)}
\geq 
-S(\rho,\rho) \sum_{j=N+1}^{\infty}\rho_j^2 \left(\sum_{z\in Y} |a(z)|\right)^2
\]
and then sending $N\to \infty$
\[
\sum_{z,w \in Y} (1-f(z)\overline{f(w)} - \sum_{j=1}^{\infty}(1-z_j \bar{w}_j)K_j(z,w))a(z) \overline{a(w)} \geq 0
\]
which proves $K_0(z,w)$ 
is positive semi-definite.
\end{proof}

\begin{lemma} \label{lemmalurking}
Let $X \subset Ball(\ell^{\infty})$.
Assume $f: X\to \C$ is a function such that 
for $j=0,1,2,\dots$, there exist positive semi-definite kernels $K_j:X\times X \to \C$
on $X$ such that
\[
1- f(z)\overline{f(w)} = 
\sum_{j=1}^{\infty}(1-z_j\bar{w}_j) K_j(z,w)
\]
where the sum converges absolutely.
Then, 
there exists a contractive operator $V$ acting on $\C\oplus \bigoplus_{j=1}^{\infty} \mathcal{H}_j$
where $\mathcal{H}_1,\mathcal{H}_2, \dots$ are Hilbert spaces such that when we write $V$ in block
form $V = \begin{pmatrix} A & B \\ C & D\end{pmatrix}$ we have
\[
f(z) = A + B\Delta(z) (1-D\Delta(z))^{-1} C
\]
\[
K_j(z,w) =  C^*(I-\Delta(w)^* D^*)^{-1}P_j(I-D\Delta(z))^{-1} C
\]
where $\Delta(z) = \sum_{j=1}^{\infty} z_j P_j$ and each $P_j$ represents projection
onto $\mathcal{H}_j$ within the direct sum $\bigoplus_{k=1}^{\infty} \mathcal{H}_k$.
With these formulas, $f$ and $K_j$ extend to $Ball(\ell^{\infty})$ and
there exists a positive semi-definite kernel $K_0(z,w)$ such that
\[
1- f(z)\overline{f(w)} = K_0(z,w) +
\sum_{j=1}^{\infty}(1-z_j\bar{w}_j) K_j(z,w)
\]
holds on $Ball(\ell^{\infty})\times Ball(\ell^{\infty})$.

\end{lemma}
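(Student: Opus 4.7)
The plan is to run a standard \emph{lurking isometry} argument in $\C \oplus \bigoplus_j \mathcal{H}_j$, with the Hilbert spaces supplied by Kolmogorov factorization of the given kernels. Since each $K_j \succeq 0$, pick $\mathcal{H}_j$ and $\phi_j : X \to \mathcal{H}_j$ with $\langle \phi_j(z), \phi_j(w) \rangle = K_j(z,w)$. Setting $z = w$ in the hypothesized Agler decomposition gives
\[
1 - |f(z)|^2 = \sum_{j} (1 - |z_j|^2)K_j(z,z) \geq (1 - \|z\|_\infty^2)\sum_{j} K_j(z,z),
\]
so $\sum_j K_j(z,z) \leq (1 - \|z\|_\infty^2)^{-1} < \infty$ for $z \in X \subset Ball(\ell^{\infty})$. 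Hence $g(z) := \bigoplus_j \phi_j(z)$ is a well-defined vector in $\mathcal{H} := \bigoplus_j \mathcal{H}_j$.

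Rewriting the Agler identity as
\[
1 + \sum_j z_j \bar{w}_j K_j(z,w) = f(z)\overline{f(w)} + \sum_j K_j(z,w),
\]
introduce the vectors
\[
u_z = \begin{pmatrix} 1 \\ \Delta(z) g(z) \end{pmatrix}, \qquad v_z = \begin{pmatrix} f(z) \\ g(z) \end{pmatrix} \in \C \oplus \mathcal{H}.
\]
A direct calculation identifies $\langle u_z, u_w \rangle$ with the left-hand side and $\langle v_z, v_w \rangle$ with the right-hand side, so the two inner products coincide for all $z, w \in X$. Consequently $u_z \mapsto v_z$ extends by linearity and continuity to an isometry $V_0$ from $\mathcal{L} := \overline{\mathrm{span}}\{u_z : z \in X\}$ onto $\overline{\mathrm{span}}\{v_z : z \in X\}$, and composing with orthogonal projection onto $\mathcal{L}$ yields a contraction $V: \C \oplus \mathcal{H} \to \C \oplus \mathcal{H}$ satisfying $V u_z = v_z$ for every $z \in X$. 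Writing $V = \begin{pmatrix} A & B \\ C & D \end{pmatrix}$ and equating components in $V u_z = v_z$ gives $(I - D\Delta(z)) g(z) = C$ and $A + B\Delta(z) g(z) = f(z)$, so $g(z) = (I - D\Delta(z))^{-1} C$ and the transfer function formula for $f$ holds; the stated formula for $K_j$ then drops out of $K_j(z,w) = \langle P_j g(z), P_j g(w) \rangle$.

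The extension to $Ball(\ell^{\infty})$ is automatic: contractivity of $V$ forces $\|D\| \leq 1$, so for any $z \in Ball(\ell^{\infty})$ we have $\|D\Delta(z)\| \leq \sup_j |z_j| < 1$ and the Neumann series defines $(I - D\Delta(z))^{-1}$. The transfer function formulas unambiguously extend $f$ and $K_j$ to that domain. To produce $K_0$, redefine $g(z) := (I - D\Delta(z))^{-1} C$ on all of $Ball(\ell^{\infty})$, so that $V u_z = v_z$ continues to hold tautologically. With $E = (I - V^*V)^{1/2} \succeq 0$,
\[
\langle u_z, u_w \rangle - \langle v_z, v_w \rangle = \langle (I - V^*V) u_z, u_w \rangle = \langle E u_z, E u_w \rangle,
\]
and the left-hand side equals $1 - f(z)\overline{f(w)} - \sum_j (1 - z_j \bar{w}_j) K_j(z,w)$. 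Setting $K_0(z,w) := \langle E u_z, E u_w \rangle$ produces a positive semi-definite kernel on $Ball(\ell^{\infty}) \times Ball(\ell^{\infty})$ that completes the decomposition. The only real obstacle is bookkeeping convergence of the various infinite sums, which reduces to the diagonal bound on $\sum_j K_j(z,z)$ above together with Cauchy--Schwarz; beyond that, the argument is structurally identical to the finite-variable lurking isometry.
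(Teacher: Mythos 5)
Your proposal is correct and is essentially the paper's own argument: Kolmogorov/Moore--Aronszajn factorization of the $K_j$, the lurking isometry on $\mathbb{C}\oplus\bigoplus_j\mathcal{H}_j$ extended to a contraction $V$, solving the block equations for the transfer function, and producing $K_0$ from a square root (the paper writes $I-V^*V=W^*W$) of the defect of $V$ after extending the identity $Vu_z=v_z$ to $Ball(\ell^{\infty})$. Your explicit diagonal bound $\sum_j K_j(z,z)\le (1-\|z\|_\infty^2)^{-1}$, which guarantees $g(z)$ lies in the direct sum, is a detail the paper leaves implicit and is a worthwhile addition.
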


The proof is a standard lurking isometry argument (for those who know what that is)
that we include for completeness (for those who do not).

\begin{proof}
By the Moore-Aronszajn theorem on reproducing kernel Hilbert space,
we can factor $K_j(z,w) = K_{j,z}^* K_{j,w}$ for $K_{j,z}$ an element
of some Hilbert space $\mathcal{H}_j$.
We write $K_{j,z}^* K_{j,w}$ instead of $\langle K_{j,w}, K_{j,z} \rangle$ 
and view $K_{j,z}^*$ as an element of the dual $\mathcal{H}_j^*$ of $\mathcal{H}_j$.  
The following map 
\[
\begin{pmatrix} 1 \\ z_1(K_{1,z})^* \\ z_2 (K_{2,z})^* \\ \vdots
\end{pmatrix}
\mapsto
\begin{pmatrix} f(z) \\ (K_{1,z})^* \\ (K_{2,z})^* \\ \vdots \end{pmatrix}
\]
initially defined for vectors indexed by $z\in X$
extends linearly and in a well-defined way to a contractive operator $V$
from $\C \oplus \bigoplus_{j=1}^{\infty} \mathcal{H}_j^*$
to $\C \oplus \bigoplus_{j=1}^{\infty} \mathcal{H}_j^* $.
We write $V$ in block form
\(
\begin{pmatrix} A & B \\
C & D \\
\end{pmatrix}
\)
with 
$A \in \C \cong \mathcal{B}(\C,\C)$, 
$B \in \mathcal{B}(\bigoplus_{j=1}^{\infty} \mathcal{H}_j^*, \C)$,
$C \in \mathcal{B}(\C, \bigoplus_{j=1}^{\infty} \mathcal{H}_j^*)$,
$D \in \mathcal{B}(\bigoplus_{j=1}^{\infty} \mathcal{H}_j^*)$,
using the notation $\mathcal{B}(X,Y)$ to denote the bounded
linear operators from $X$ to $Y$ (as well as $\mathcal{B}(X)= \mathcal{B}(X,X)$).
Let $\Delta(z) \in \mathcal{B}(\bigoplus_{j=1}^{\infty} \mathcal{H}_j^*)$ be the diagonal operator
sending 
\[
(h_j)_{j\in \nat} \in \bigoplus_{j=1}^{\infty} \mathcal{H}_j^* \mapsto (z_j h_j)_{j\in\nat} \in \bigoplus_{j=1}^{\infty} \mathcal{H}_j^*.
\]
Let $F(z) := (K_{j,z}^*)_{j\in \nat} \in \bigoplus_{j=1}^{\infty} \mathcal{H}_j^*$. 
Then,
\[
V \begin{pmatrix} 1 \\ \Delta(z) F(z) \end{pmatrix} = \begin{pmatrix} f(z) \\ F(z) \end{pmatrix}
\]
which implies

\begin{align} \label{blocksolve1}
A + B \Delta(z) F(z) &= f(z) \\ \label{blocksolve2}
C+ D \Delta(z) F(z) &= F(z) 
\end{align}

 Solving for $F(z)$ and then $f(z)$ we obtain
 \[
 \begin{aligned}
 F(z) &= (I-D\Delta(z))^{-1} C \\
 f(z) &= A + B \Delta(z) (I-D \Delta(z))^{-1} C.
 \end{aligned}
 \]
 Note the expressions on the right are defined as written
 for $z \in Ball(\ell^{\infty})$.
 Evidently, $F(w)^*P_jF(z)$ extends $K_j(z,w)$.
 Since $V$ is contractive, we can factor $I-V^*V = W^*W$
 for some operator $W$.
 Let 
 \[
 G(z) = W  \begin{pmatrix} 1 \\ \Delta(z) F(z) \end{pmatrix}
 \]
 so that
 \[
 \begin{pmatrix} 1 \\ \Delta(w) F(w) \end{pmatrix}^* \begin{pmatrix} 1 \\ \Delta(z) F(z) \end{pmatrix}
 = \begin{pmatrix} f(w) \\ F(w) \end{pmatrix}^* \begin{pmatrix} f(z) \\ F(z) \end{pmatrix}
 + G(w)^*G(z).
 \]
 This rearranges into
 \[
 1- f(z)\overline{f(w)} =
 F(w)^*(I-\Delta(w)^*\Delta(z))F(z) + G(w)^*G(z)
 \]
 and since
 \[
 F(w)^*(I-\Delta(w)^*\Delta(z))F(z)
 =
 \sum_{j=1}^{\infty} (1-\bar{w}_jz_j) F(w)^*P_j F(z)
 \]
 we have the desired extension of the Agler decomposition
 using $K_0(z,w) = G(w)^*G(z)$.
    \end{proof}

    Lemmas \ref{AghasAD} and \ref{lemmalurking} establish most of Theorem \ref{mainthm}.
    For the remaining part, we need a basic estimate on transfer function formulas
    in order to establish the full von Neumann inequality for the Agler class.
 
 \begin{lemma} \label{tfrest}
Suppose $V$ is a contractive operator acting on $\C\oplus \bigoplus_{j=1}^{\infty} \mathcal{H}_j$,
where $\mathcal{H}_1,\mathcal{H}_2, \dots$ are Hilbert spaces.
Writing $V$ in block
form $V = \begin{pmatrix} A & B \\ C & D\end{pmatrix}$ we define
for $z \in Ball(\ell^{\infty})$
\[
f(z) = A + B\Delta(z) (1-D\Delta(z))^{-1} C
\]
where $\Delta(z) = \sum_{j=1}^{\infty} z_j P_j$ and each $P_j$ represents projection
onto $\mathcal{H}_j$ within the direct sum $\bigoplus_{k=1}^{\infty} \mathcal{H}_k$.
 Then, for $z,w \in Ball(\ell^{\infty})$
 \[
 f(z) - f(w) = B(1-\Delta(z)D)^{-1}\Delta(z-w)(1-D\Delta(w))^{-1}C.
 \]
 
 \end{lemma}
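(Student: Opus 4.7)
The plan is to reduce the identity to two elementary manipulations: the intertwining $\Delta(z)(1-D\Delta(z))^{-1} = (1-\Delta(z)D)^{-1}\Delta(z)$ and the standard resolvent formula. Introduce the shorthand $R(z) := (1-D\Delta(z))^{-1}$ and $L(z) := (1-\Delta(z)D)^{-1}$. These are well-defined bounded operators for $z \in Ball(\ell^{\infty})$: contractivity of $V$ gives $\|D\|\leq 1$, and $\|\Delta(z)\| = \|z\|_{\infty}<1$, so the relevant Neumann series converge. The intertwining identity is immediate from the bare calculation
\[
\Delta(z)(1-D\Delta(z)) = \Delta(z)-\Delta(z)D\Delta(z) = (1-\Delta(z)D)\Delta(z),
\]
after multiplying on the right by $R(z)$ and on the left by $L(z)$; this yields $L(z)\Delta(z) = \Delta(z)R(z)$.

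Using this, rewrite $f(z) = A + B\Delta(z)R(z)C = A + BL(z)\Delta(z)C$, so
\[
f(z)-f(w) = B\bigl[L(z)\Delta(z) - L(w)\Delta(w)\bigr]C.
\]
The bracket splits as $L(z)\Delta(z-w) + [L(z)-L(w)]\Delta(w)$. For the difference $L(z)-L(w)$, apply the resolvent identity
\[
L(z) - L(w) = L(z)\bigl(L(w)^{-1} - L(z)^{-1}\bigr)L(w) = L(z)\Delta(z-w)\,D\,L(w),
\]
and then use $L(w)\Delta(w) = \Delta(w)R(w)$ to conclude $[L(z)-L(w)]\Delta(w) = L(z)\Delta(z-w)\,D\Delta(w)R(w)$. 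Collecting, we obtain
\[
L(z)\Delta(z) - L(w)\Delta(w) = L(z)\Delta(z-w)\bigl[1 + D\Delta(w)R(w)\bigr].
\]
Finally, $(1-D\Delta(w))R(w) = 1$ rearranges to $1 + D\Delta(w)R(w) = R(w)$, giving the desired formula $f(z)-f(w) = BL(z)\Delta(z-w)R(w)C$.

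I do not anticipate a substantive obstacle: the argument is purely algebraic once the resolvents make sense. The only point requiring care is justifying the existence of $L(z), R(z)$ on all of $Ball(\ell^{\infty})$ rather than just on $Ball(c_{00})$, but this is covered by the Neumann series argument above. Everything else is bookkeeping with the intertwining and resolvent identities.
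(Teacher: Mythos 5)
Your proof is correct and takes essentially the same route as the paper's: both arguments telescope the difference $f(z)-f(w)$, apply the resolvent identity, and use the intertwining $\Delta(z)(1-D\Delta(z))^{-1}=(1-\Delta(z)D)^{-1}\Delta(z)$ to produce the factor $(1-\Delta(z)D)^{-1}$; you merely telescope on the opposite side (splitting off $L(z)\Delta(z-w)$ first rather than $\Delta(z-w)R(w)$). Your explicit justification that the resolvents exist on all of $Ball(\ell^{\infty})$ via Neumann series is a point the paper leaves implicit.
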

 
 \begin{proof}
 \[
\begin{aligned}
f(z) - f(w) &= B(\Delta(z)(1-D\Delta(z))^{-1} - \Delta(w)(1-D\Delta(w))^{-1})C \\
&= B( \Delta(z)((1-D\Delta(z))^{-1} - (1-D\Delta(w))^{-1}) + \Delta(z-w) (1-D\Delta(w))^{-1})C\\
&=B(\Delta(z)(1-D\Delta(z))^{-1})(D\Delta(z-w))(1-D\Delta(w))^{-1} + \Delta(z-w)(1-D\Delta(w))^{-1})C\\
&= B(\Delta(z)(1-D\Delta(z))^{-1})D + 1)\Delta(z-w)(1-D\Delta(w))^{-1})C\\
&=
B(1-\Delta(z)D)^{-1}\Delta(z-w)(1-D\Delta(w))^{-1}C.
\end{aligned}
\]
\end{proof}
 
 Now we finish the proof of Theorem \ref{mainthm}.
As written in Theorem \ref{mainthm}, for an infinite tuple $T = (T_1,T_2,\dots)$
of operators on a common Hilbert space $\mathcal{H}$ satisfying
\[
\|T\|_{\infty} := \sup_j \|T_j\| < 1
\]
we define
\[
f(T) := (A\otimes I) + (B\otimes I) \Delta(T) (1-(D\otimes I)\Delta(T))^{-1} (C\otimes I)
\]
where $\Delta(T) := \sum_{j=1}^{\infty} P_j \otimes T_j$.  Note that this definition
does not require the operators $(T_j)_j$ to pairwise commute but if they do then
each $T_j$ commutes with $f(T)$ since each $I\otimes T_j$ commutes with $\Delta(T)$.

Since $(D\otimes I)\Delta(T)$ is strictly contractive, $f(T)$ equals the
absolutely convergent sum
\[
(A\otimes I) + (B\otimes I) \Delta(T)\sum_{j=0}^{\infty} ((D\otimes I)\Delta(T))^{j} (C\otimes I)
\]
and substituting $z\in Ball(\ell^{\infty})$ we obtain an absolutely convergent
homogeneous expansion for $f(z)$
\begin{equation} \label{tfrsum}
f(z) = A + B\Delta(z)\sum_{j=0}^{\infty} ((D\Delta(z))^{j} C.
\end{equation}

Before we finish the proof of Theorem \ref{mainthm} we make 
some clarifications about our functional calculus.

\begin{remark}
In finitely many variables we stated that our convention/definition 
for $f(T)$ is via an absolutely convergent power series expansion.
Therefore, it should be pointed out that this new formulation
of ``$f(T)$'' using the transfer function formula matches the old one.
All that really needs to be said is that when we
insert $z=(z_1,\dots, z_N,0,\dots)$ into \eqref{tfrsum}
the homogeneous terms $B\Delta(z)(D\Delta(z))^jC$
are homogeneous polynomials and since $f$ is
analytic on $\D^N$, the monomial sum we obtain
from expanding $B\Delta(z)(D\Delta(z))^jC$
is absolutely convergent in $\D^N$.  Thus,
evaluating $f(T)$ at a finite tuple $T=(T_1,T_2,\dots, T_N,0,\dots)$
can either be evaluated using the transfer function formula
or the absolutely convergent power series.  $\diamond$ \end{remark}

The proof of Lemma \ref{tfrest} extends directly to prove that for another such tuple $S$
acting on the same Hilbert space $\mathcal{H}$ as $T$ 
we have
\[
f(T) - f(S) = (B\otimes I)(1-\Delta(T)(D\otimes I))^{-1}\Delta(T-S)(1-(D\otimes I)\Delta(S))^{-1}(C\otimes I).
\]
This implies the estimate that for $x \in \mathcal{H}$
\[
|(f(T)-f(S))x|^2 \leq \|(B\otimes I)(1-\Delta(T)(D\otimes I))^{-1}\|^2 |\Delta(T-S)(1-(D\otimes I)\Delta(S))^{-1}(C\otimes x)|^2.
\]
Letting $T^{(N)} = (T_1,\dots, T_N,0,\dots)$ we have $f(T^{(N)}) = f_N(T)$ and
\[
\begin{aligned}
|(f_N(T)-f(T))x|^2 &\leq (1-\|T\|_{\infty})^{-2} |\Delta(T^{(N)}-T)(1-(D\otimes I)\Delta(T))^{-1}(C\otimes x)|^2\\
&=
(1-\|T\|_{\infty})^{-2} \sum_{j=N+1}^{\infty} |(P_j\otimes T_j)(1-(D\otimes I)\Delta(T))^{-1}(C\otimes x)|^2
\end{aligned}
\]
and this goes to $0$ as $N\to \infty$.  Thus, $f_N(T) \to f(T)$ in the strong operator topology.
This concludes the proof of Theorem \ref{mainthm}.

\section{Proof of Theorem \ref{Pickthm}}
That (1) implies (2) follows from Lemmas \ref{AghasAD} and \ref{lemmalurking}.
Proving (2) implies (1) is a standard lurking isometry argument 
that is somewhat similar to our proof of Theorem \ref{mainthm}.  
Proving (1) implies (3) consists mostly of technicalities that we discuss next.
The proof of (3) implies (1) is the main contribution below.

Regarding (1) implies (3), by definition,
 we can make sense of $\tilde{f}(T)$
for $\tilde{f} \in \mathcal{A}_\infty$ when $(\|T_j\|)_{j\in \nat} \in Ball(c_{00})$
and Theorem \ref{mainthm} lets us make sense of it when
$(\|T_j\|)_{j\in \nat} \in Ball(\ell^{\infty})$.  
To prove $\|\tilde{f}(T)\|\leq 1$ when $T$ consists of \emph{matrices}
that are commuting, contractive, simultaneously diagonalizable
with joint spectrum $\sigma(T) \subset X$ and eigenspaces of dimension
at most $1$, we can give a continuity argument.
First, $\|\tilde{f}(rT)\|\leq 1$ holds for $r<1$ because we will have 
\[
\tilde{f}_N(rT) \to \tilde{f}(rT)
\]
in the strong operator topology as $N\to \infty$---as in previous
sections $\tilde{f}_N$ refers to the restriction of $\tilde{f}$ to $\D^N$.
Note that $\tilde{f}_N(rT)$ is defined in terms of the absolutely convergent
power series of $\tilde{f}_N$.  
Next, we use the diagonalizability properties of $T$;
let $\vec{b}(z)$ be the eigenvector associated to joint eigenvalue $z\in \sigma(T)$.
Then, for any function $a: \sigma(T) \to \C$ we have
\[
| \tilde{f}(rT) \sum_{z\in \sigma(T)} a(z) \vec{b}(z)| = |\sum_{z\in \sigma(T)} \tilde{f}(rz) a(z) \vec{b}(z)|
\leq |\sum_{z\in \sigma(T)} a(z) \vec{b}(z)|.
\]
We can send $r\nearrow 1$ to conclude $\|\tilde{f}(T)\| \leq 1$.  
This proves (1) implies (3).  

Our main contribution is the proof of (3) implies (2) assuming the finite set $X$ belongs to $\HD$.
This is a modification of the finite variable
cone separation argument; the main difference being
Lemma \ref{closedlemma} below.
Consider the following cone of functions on $X\times X$
\[
\begin{aligned}
\mathcal{C} = \left\{ (z,w) \right.&\mapsto \sum_{j=1}^{\infty} (1-z_j\bar{w}_j) A_j(z,w): \\
& A_1, A_2,\dots \text{ are positive semi-definite functions on } X; \\
&\left. \forall z\in X, \sum_{j=1}^{\infty} A_j(z,z)<\infty\right\}.
\end{aligned}
\]

\begin{lemma} \label{closedlemma}
$\mathcal{C}$ is closed.
\end{lemma}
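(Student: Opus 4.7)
The plan is to prove closedness by a diagonal extraction followed by absorbing a leftover positive semi-definite ``tail'' using a Szegő-kernel Schur product. Suppose $F^{(n)} \in \mathcal{C}$ with $F^{(n)} \to F$ pointwise on the finite set $X\times X$, and write $F^{(n)} = \sum_{j\geq 1}(1-z_j\bar{w}_j)A_j^{(n)}$ with each $A_j^{(n)} \succeq 0$ on $X$. Since $X$ is a finite subset of $\D^{\infty}$, the quantity $\rho := \sup_{z\in X,\,j\in\nat}|z_j|$ is strictly less than $1$. Using that each term in $\sum_j(1-|z_j|^2)A_j^{(n)}(z,z) = F^{(n)}(z,z)$ is non-negative and that the left-hand side is uniformly bounded by some $M$, I immediately obtain the uniform estimate $\sum_j A_j^{(n)}(z,z) \leq M/(1-\rho^2)$ for every $n$ and $z \in X$. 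In particular every $|A_j^{(n)}(z,w)|$ is uniformly bounded.

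A standard diagonal argument then produces a subsequence along which $A_j^{(n)} \to A_j$ pointwise on $X\times X$ for every $j$, and along which the tails $T_N^{(n)}(z,w) := \sum_{j>N}A_j^{(n)}(z,w)$ (each psd on $X$ and uniformly bounded) converge pointwise to some psd $T_N$ for every $N$. The $T_N$ are decreasing in the psd order, so $T_\infty := \lim_N T_N$ exists and is psd; a Fatou-style comparison gives $\sum_j A_j(z,z) < \infty$.

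The essential tail estimate uses $X \subset \HD$: setting $\rho_j := \max_{z\in X}|z_j|$, we have $\sum_j\rho_j^2 < \infty$. Splitting
\[
\sum_{j>N}(1-z_j\bar{w}_j)A_j^{(n)}(z,w) = T_N^{(n)}(z,w) - \sum_{j>N}z_j\bar{w}_j A_j^{(n)}(z,w),
\]
the second sum is bounded in absolute value by a constant multiple of $\sum_{j>N}\rho_j^2$, uniformly in $n$, and hence vanishes as $N\to\infty$. Passing to the limit first in $n$ and then in $N$ in the identity $F^{(n)} = \sum_{j\leq N}(1-z_j\bar{w}_j)A_j^{(n)} + \sum_{j>N}(1-z_j\bar{w}_j)A_j^{(n)}$ yields
\[
F(z,w) = \sum_{j=1}^{\infty}(1-z_j\bar{w}_j)A_j(z,w) + T_\infty(z,w).
\]

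The final step — the only one with any conceptual content — is to absorb the residual term $T_\infty$. Since the one-variable Szegő kernel $1/(1-z_1\bar{w}_1)$ is positive semi-definite on $X\times X$ (its values stay away from any singularity because $|z_1 \bar{w}_1|\leq \rho^2<1$), the Schur product $B := T_\infty/(1-z_1\bar{w}_1)$ is psd, and $(1-z_1\bar{w}_1)B = T_\infty$ by construction. Replacing $A_1$ by $A_1+B$ (and keeping all other $A_j$) produces the sought-after decomposition of $F$, with summability on the diagonal preserved, so $F \in \mathcal{C}$. The main hurdle is recognizing that the diagonal extraction leaves behind a psd remainder that a priori need not belong to $\mathcal{C}$; the Szegő absorption trick, enabled by $\rho<1$, dispatches it in one line.
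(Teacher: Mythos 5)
Your argument is correct and follows essentially the same route as the paper's proof: uniform boundedness of the kernels, a diagonal extraction, a tail estimate driven by $\sum_{j}\rho_j^2<\infty$ (i.e.\ $X\subset \HD$), and absorption of the leftover positive semi-definite remainder via a Schur product with the Szeg\H{o} kernel $1/(1-z_1\bar{w}_1)$. The only difference is cosmetic: you obtain the uniform bound $\sum_j A_j^{(n)}(z,z)\leq M/(1-\rho^2)$ directly from the diagonal identity using $\rho<1$, whereas the paper multiplies by the infinite product $S(z,w)=\prod_j (1-z_j\bar{w}_j)^{-1}$ and uses $S_j\succeq 1$; both are valid.
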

\begin{proof}
Let 
\[
C_n(z,w) = \sum_{j=1}^{\infty} (1-z_j\bar{w}_j) A_{n,j}(z,w)
\]
define a sequence of functions in $\mathcal{C}$ that converges to the function $C:X\times X\to \C$ 
pointwise; in particular, each $A_{n,j}$ is positive semi-definite.
  We must show $C \in \mathcal{C}$.

Let $\delta:X\times X \to \C$ denote the function $\delta(z,w) = 1$ if $z=w$ and $\delta(z,w) = 0$ if $z\ne w$.
There necessarily exist constants $c_1, c_2$ such that
for all $n$,
\(
c_1 \delta \succeq C_n \succeq c_2 \delta.
\)
This looks more familiar when we view our functions on $X\times X$
as matrices.
Since $X \subset \D_2^{\infty}$ we can define
\[
S(z,w) = \prod_{j=1}^{\infty} \frac{1}{1-z_j \bar{w}_j} \text{ and } S_j(z,w) = (1-z_j\bar{w}_j)S(z,w) 
= \prod_{i\ne j} \frac{1}{1-z_i \bar{w}_i};
\]
which are positive semi-definite and satisfy $S, S_j \succeq 1$, with `$1$' representing the identically $1$
function.
Some parts of what follow are similar to the proof of Lemma \ref{AghasAD}.
Now, for any $i \in \nat$
\[
c_1S \succeq C_n S = 
\sum_{j=1}^{\infty} S_j A_{n,j} 
\succeq  \sum_{j=1}^{\infty} A_{n,j}
\succeq A_{n,i} \succeq 0
\]
which shows the matrices $A_{n,i}$ are uniformly bounded. Let $c_3>0$ satisfy $c_3 \delta \succeq c_1 S$.

Using a diagonal argument we can select a subsequence such that each $A_{n,i}$
converges to a positive semi-definite matrix $A_i$ as $n\to \infty$.  
Also, for each $N$ we have
\(
C_n S \succeq \sum_{j=1}^{N} S_j  A_{n,j} \succeq \sum_{j=1}^{N} A_{n,j}.
\)
Sending $n \to \infty$ we have
\(
C S \succeq \sum_{j=1}^{N} S_j A_j \succeq \sum_{j=1}^{N} A_j
\)
and sending $N\to \infty$ we have
\(
C S \succeq 
\sum_{j=1}^{\infty} S_j  A_j \succeq
\sum_{j=1}^{\infty} A_j
\)
where the sums converge absolutely.
Next, recalling $Z_j\otimes \bar{Z}_j$ denotes the
function $(z,w) \mapsto z_j\bar{w}_j$ we have
\[
\begin{aligned}
C_n &- \sum_{j=1}^{N} (1-Z_{j}\otimes \bar{Z}_j)A_{n,j} \\
&= 
\sum_{j=N+1}^{\infty} (1-Z_j\otimes \bar{Z}_j)A_{n,j}\\
&\succeq
-c_3 \sum_{j=N+1}^{\infty}  (Z_{j}\otimes \bar{Z}_j) \delta\\
&\succeq
-c_3  \max\{\sum_{j=N+1}^{\infty} |z_j|^2: z \in X\} \delta
\end{aligned}
\]
The last inequality amounts to the fact that for any function $a:X\to \C$
we have
\[
\begin{aligned}
&\sum_{z,w\in X} \sum_{j=N+1}^{\infty} (z_j\bar{w}_j) \delta(z,w) a(z) \overline{a(w)} \\
&=
\sum_{z\in X}\sum_{j=N+1}^{\infty} |z_j|^2 |a(z)|^2 \\
&\leq
\max\{\sum_{j=N+1}^{\infty} |z_j|^2: z\in X\} \sum_{z\in X}|a(z)|^2. 
\end{aligned}
\]
Setting $M_N = \max\{\sum_{j=N+1}^{\infty} |z_j|^2: z\in X\}$ we have
$M_N \to 0$ since $X \subset \ell^2$.
Sending $n \to \infty$ we have
\[
C - \sum_{j=1}^{N} (1-Z_j\otimes \bar{Z}_j)A_j \succeq -c_3 M_N \delta
\]
and finally sending $N\to \infty$ we have
\[
A_0 := C - \sum_{j=1}^{\infty} (1-Z_j\otimes \bar{Z}_j)A_j \succeq 0.
\]
Finally, $A_0$ can be absorbed into any other term, say $A_1$ as 
$\tilde{A}_1 = A_1 + \frac{1}{1-Z_1\otimes \bar{Z}_1} A_0$
to see that 
\[
C =  (1-Z_1\otimes \bar{Z}_1)\tilde{A}_1+ \sum_{j=2}^{\infty} (1-Z_j\otimes \bar{Z}_j)A_j
\]
is of the desired form.
\end{proof}

What follows is now standard.
Suppose $f:X \to \overline{\D}$ is a function
with the assumed property in item (3).
Suppose the function on $X\times X$, $F= 1 - f\otimes \bar{f}$ is not in the cone $\mathcal{C}$.
By the Hahn-Banach hyperplane separation theorem, there exists a 
function $B: X\times X \to \C$ with $B(z,w) = \overline{B(w,z)}$ such that 
\[
\sum_{z,w \in X} F(z,w) B(z,w) <0 \text{ and for all } C \in \mathcal{C} 
\text{ we have } \sum_{z,w\in X} C(z,w) B(z,w) \geq 0.
\]

The second condition implies $B \succeq 0$ by setting 
$C = (1-Z_1\otimes \bar{Z}_1) \frac{ a\otimes \bar{a} }{1-Z_1\otimes \bar{Z}_1}$
for an arbitrary function $a:X\to \C$
and observing
\[
 \sum_{z,w\in X} C(z,w) B(z,w) = \sum_{z,w} a(z)\overline{a(w)} B(z,w) \geq 0.
\]
 Next, we factor $B(w,z) = \overline{B(z,w)} = \vec{b}(z)^* \vec{b}(w)$ for vectors $\vec{b}(z) \in \C^r$ where
 $r$ is the rank of the matrix$(B(z,w))_{z,w\in X}$.  
 
 Choosing now $C_j = (1-Z_j\otimes \bar{Z}_j) (a\otimes \bar{a})$ for an arbitrary $a:X\to \C$ 
 \[
 \begin{aligned}
 0&\leq \sum_{z,w \in X} (1-z_j\bar{w}_j)a(z) \overline{a(w)}B(z,w)\\
 &=  \sum_{z,w\in X} (1-z_j\bar{w}_j) a(z) \overline{a(w)} \vec{b}(z)^t \overline{\vec{b}(w)}\\
 &=
 \left|\sum_{w\in X} a(w) \vec{b}(w)\right|^2 - \left|\sum_{w\in X} w_j a(w) \vec{b}(w)\right|^2 \\
 \end{aligned}
 \]
 which proves that maps $T_j: \sum_{w\in X} a(w) \vec{b}(w) \mapsto \sum_{w\in X} w_j a(w) \vec{b}(w)$
 are well-defined, contractive, diagonalizable.  
Setting $T=(T_1,T_2,\dots)$, we have assumed that $f(T)$ is contractive
which means for all functions $a:X\to \C$
\[
\left|\sum_{w\in X} a(w) \vec{b}(w)\right|^2 - \left|\sum_{w\in X} f(w) a(w) \vec{b}(w)\right|^2 \geq 0
\] 
 and this means
 \[
 \sum_{z,w\in X} (1-f(z)\overline{f(w)}) B(z,w) a(z) \overline{a(w)} \geq 0
 \]
 and this means
 \[
FB = (1- f\otimes \bar{f})B \succeq 0
 \]
 (recall $F = 1-f\otimes f$)
and in particular
\[
\sum_{z,w\in X} F(z,w) B(z,w) \geq 0
\]
which is a contradiction. 
This proves $F \in \mathcal{C}$ or more precisely,
there exist positive semi-definite matrices $A_1, A_2,\dots$ with
rows and columns indexed by $S$ such that
\[
1-f(z)\overline{f(w)} = \sum_{j=1}^{\infty} (1-z_j\bar{w}_j) A_j(z,w)
\]
and $\sum_{j=1}^{\infty} A_j(z,z) < \infty$.
This proves that (2) implies (3) as well as the full proof of Theorem \ref{Pickthm}.

\section{Proof of Theorem \ref{Dirichletthm}}

The equivalence of (1) and (2) is straightforward based
on previous results.
The implication (1) implies (3) follows from Theorem \ref{Pickthm}.

 To prove (3) implies (2), 
 we note that our function $f:\C_+\to \overline{\D}$ gives rise to a function $F: X \to \overline{\D}$
where 
\[
X = \{\pi(s) := (p_1^{-s},p_2^{-s},\dots): s\in \C_+\} \subset Ball(c_0)
\]
and $F(\pi(s)) = f(s)$.
We can apply the interpolation result of Theorem \ref{Pickthm} as follows. 
Let $Y \subset \{s \in \C: \Re s >1/2\}$ be a countable set with a limit point in $\{s \in \C: \Re s >1/2\}$.
We want a limit point so that it is a determining set for holomorphic functions and we want
$\Re s > 1/2$ so that $\pi(s) \in \HD$.  
Let $\bigcup_{n=1}^{\infty} Y_n = Y$ be an increasing union of finite sets.
Set $X_n = \pi(Y_n), X_{\infty} = \pi(Y)$.     

Fix $n$ and 
let $T = (T_1, T_2,\dots)$ be an infinite tuple of contractive, commuting matrices
with $\sigma(T) \subset X_n$ and 1 dimensional joint eigenspaces.  
The eigenvalues of $T_1$ are of the form $p_1^{-s} = 2^{-s}$ for $s \in Y_n$.

We can take $M = -\log T_1/\log 2$ using the principal log and then $T_j = p_j^{-M}$
for all $j$.  By assumption, $\|T_j\|= \|p_j^{-M}\| \leq 1$ and this extends to
all natural numbers by factoring into primes: $\|n^{-M}\| \leq 1$ for all $n\in \nat$.
By assumption (3), $\|f(M)\| = \|F(T)\| \leq 1$.  
Therefore, by the implication (3) $\implies$ (1) in Theorem \ref{Pickthm},
there exists $G_n \in \mathcal{A}_{\infty}$ such that $\left.G_n\right|_{X_n} = \left.F\right|_{X_n}$.
In particular, $g_n(s) := G_n(\pi(s)) \in \mathscr{A}^{\infty}$
and agrees with $f_n$ on $Y_n$.  
 
By the Montel Theorem of Remark \ref{montel},
since $G_n \in \mathcal{A}_{\infty} \subset \mathcal{S}_{\infty}$,
there is a subsequence of $n\in \nat$ such that $G_n \to G \in \mathcal{S}_{\infty}$ 
uniformly on compact subsets of $Ball(c_0)$.  
The limit $G$ necessarily agrees with $F$ on $X_{\infty}$.
Since membership in $\mathcal{A}_{\infty}$ can be tested by
checking whether $G$ restricted to $\D^N$ belongs to $\mathcal{A}_N$ by Theorem \ref{mainthm},
and since this in turn can be tested by examining $G$ on finite subsets
of $\D^N$, we see that the limit $G$ belongs to $\mathcal{A}_{\infty}$
since each $G_n$ belongs to $\mathcal{A}_{\infty}$.
Finally, $g = G\circ \pi \in \mathscr{A}^{\infty}$ and agrees
with $f$ on the set of uniqueness $Y$, so we see that $g=f \in \mathscr{A}^{\infty}$.

\section{Proof of Theorem \ref{DirichletvNthm}}

Given $f \in \mathscr{A}^{\infty}$, let $F \in \mathcal{A}_{\infty}$
be the Bohr lift of $f$; namely, $f(s) = F(p_1^{-s},p_2^{-s},\dots)$.
Again, forming $F_N(z_1,z_2,\dots) = F(z_1,\dots, z_N,0,\dots)$,
we see that the functions $f_N(s) := F_N(p_1^{-s},p_2^{-s},\dots)$
converge uniformly to $f$ on the half planes 
$\{s\in \C: \Re s \geq \epsilon\}$ for each $\epsilon>0$.
The power series for $F_N$ converges absolutely on $\D^N$
and therefore the Dirichlet series for 
$f_N$ converges to $f_N$ absolutely on $\C_{+}$.
Because of this the holomorphic
functional calculus evaluation $f_N(M)$
agrees with $F_N(p_1^{-M},p_2^{-M},\dots)$.  
Since $F_N \in \mathcal{A}_{\infty}$, 
\[
\|f_N(M)\| = \|F_N(p_1^{-M},p_2^{-M},\dots)\| \leq 1.
\]
Since $f_N$ converges uniformly to $f$ on a half-plane
containing $\sigma(M)$, $f_N(M) \to f(M)$ in operator norm
and therefore $\|f(M)\| \leq 1$ as desired. This
completes the proof.
 
\section*{Acknowledgments}
Thank you to the referee for a thoughtful report and for pointing out several typos.
Thank you to Professor Daniel Alpay for bringing the reference \cite{AL} to our attention.

 \begin{bibdiv}\begin{biblist}
 
 \bib{Agler}{article}{
   author={Agler, Jim},
   title={On the representation of certain holomorphic functions defined on
   a polydisc},
   conference={
      title={Topics in operator theory: Ernst D. Hellinger memorial volume},
   },
   book={
      series={Oper. Theory Adv. Appl.},
      volume={48},
      publisher={Birkh\"auser, Basel},
   },
   isbn={3-7643-2532-1},
   date={1990},
   pages={47--66},
   review={\MR{1207393}},
}

\bib{AMpick}{article}{
   author={Agler, Jim},
   author={McCarthy, John E.},
   title={Nevanlinna-Pick interpolation on the bidisk},
   journal={J. Reine Angew. Math.},
   volume={506},
   date={1999},
   pages={191--204},
   issn={0075-4102},
   review={\MR{1665697}},
   doi={10.1515/crll.1999.004},
}

\bib{AMbook}{book}{
   author={Agler, Jim},
   author={McCarthy, John E.},
   title={Pick interpolation and Hilbert function spaces},
   series={Graduate Studies in Mathematics},
   volume={44},
   publisher={American Mathematical Society, Providence, RI},
   date={2002},
   pages={xx+308},
   isbn={0-8218-2898-3},
   review={\MR{1882259}},
   doi={10.1090/gsm/044},
}

 \bib{AMY}{article}{
   author={Agler, Jim},
   author={McCarthy, John E.},
   author={Young, N. J.},
   title={A Carath\'eodory theorem for the bidisk via Hilbert space methods},
   journal={Math. Ann.},
   volume={352},
   date={2012},
   number={3},
   pages={581--624},
   issn={0025-5831},
   review={\MR{2885589}},
   doi={10.1007/s00208-011-0650-7},
}
 
 \bib{AMY2}{article}{
   author={Agler, Jim},
   author={McCarthy, John E.},
   author={Young, N. J.},
   title={Operator monotone functions and L\"owner functions of several
   variables},
   journal={Ann. of Math. (2)},
   volume={176},
   date={2012},
   number={3},
   pages={1783--1826},
   issn={0003-486X},
   review={\MR{2979860}},
   doi={10.4007/annals.2012.176.3.7},
}

\bib{polyhedraAMY}{article}{
   author={Agler, Jim},
   author={McCarthy, John E.},
   author={Young, N. J.},
   title={On the representation of holomorphic functions on polyhedra},
   journal={Michigan Math. J.},
   volume={62},
   date={2013},
   number={4},
   pages={675--689},
   issn={0026-2285},
   review={\MR{3160536}},
   doi={10.1307/mmj/1387226159},
}

\bib{AMYbook}{book}{
   author={Agler, Jim},
   author={McCarthy, John Edward},
   author={Young, Nicholas},
   title={Operator analysis---Hilbert space methods in complex analysis},
   series={Cambridge Tracts in Mathematics},
   volume={219},
   publisher={Cambridge University Press, Cambridge},
   date={2020},
   pages={xv+375},
   isbn={978-1-108-48544-9},
   review={\MR{4411370}},
   doi={10.1017/9781108751292},
}

\bib{AL}{article}{
   author={Alpay, Daniel},
   author={Levanony, David},
   title={Rational functions associated with the white noise space and
   related topics},
   journal={Potential Anal.},
   volume={29},
   date={2008},
   number={2},
   pages={195--220},
   issn={0926-2601},
   review={\MR{2430613}},
   doi={10.1007/s11118-008-9094-4},
}

 \bib{Ando}{article}{
   author={And\^o, T.},
   title={On a pair of commutative contractions},
   journal={Acta Sci. Math. (Szeged)},
   volume={24},
   date={1963},
   pages={88--90},
   issn={0001-6969},
   review={\MR{0155193}},
}

\bib{Barik}{article}{
   author={Barik, Sibaprasad},
   author={Bhattacharjee, Monojit},
   author={Das, B. Krishna},
   title={Commutant lifting in the Schur-Agler class},
   journal={J. Operator Theory},
   volume={91},
   date={2024},
   number={2},
   pages={399--419},
   issn={0379-4024},
   review={\MR{4750925}},
}

\bib{Bhowmik}{article}{
   author={Bhowmik, Mainak},
   author={Kumar, Poornendu},
   title={Factorization of functions in the Schur-Agler class related to
   test functions},
   journal={Proc. Amer. Math. Soc.},
   volume={152},
   date={2024},
   number={9},
   pages={3991--4001},
   issn={0002-9939},
   review={\MR{4781990}},
   doi={10.1090/proc/16900},
}

\bib{HK}{article}{
   author={Dan, Hui},
   author={Guo, Kunyu},
   title={The periodic dilation completeness problem: cyclic vectors in the
   Hardy space over the infinite-dimensional polydisk},
   journal={J. Lond. Math. Soc. (2)},
   volume={103},
   date={2021},
   number={1},
   pages={1--34},
   issn={0024-6107},
   review={\MR{4203042}},
   doi={10.1112/jlms.12365},
}

 \bib{DG}{article}{
   author={Davie, A. M.},
   author={Gamelin, T. W.},
   title={A theorem on polynomial-star approximation},
   journal={Proc. Amer. Math. Soc.},
   volume={106},
   date={1989},
   number={2},
   pages={351--356},
   issn={0002-9939},
   review={\MR{0947313}},
   doi={10.2307/2048812},
}

\bib{Debnath}{article}{
   author={Debnath, Ramlal},
   author={Sarkar, Jaydeb},
   title={Factorizations of Schur functions},
   journal={Complex Anal. Oper. Theory},
   volume={15},
   date={2021},
   number={3},
   pages={Paper No. 49, 31},
   issn={1661-8254},
   review={\MR{4239032}},
   doi={10.1007/s11785-021-01101-x},
}

\bib{Dirichletbook}{book}{
   author={Defant, Andreas},
   author={Garc\'ia, Domingo},
   author={Maestre, Manuel},
   author={Sevilla-Peris, Pablo},
   title={Dirichlet series and holomorphic functions in high dimensions},
   series={New Mathematical Monographs},
   volume={37},
   publisher={Cambridge University Press, Cambridge},
   date={2019},
   pages={xxvii+680},
   isbn={978-1-108-47671-3},
   review={\MR{3967103}},
   doi={10.1017/9781108691611},
}

\bib{DM}{article}{
   author={Dritschel, Michael A.},
   author={McCullough, Scott},
   title={Test functions, kernels, realizations and interpolation},
   conference={
      title={Operator theory, structured matrices, and dilations},
   },
   book={
      series={Theta Ser. Adv. Math.},
      volume={7},
      publisher={Theta, Bucharest},
   },
   isbn={978-973-87899-0-6},
   date={2007},
   pages={153--179},
   review={\MR{2389623}},
}
 
 \bib{HLS}{article}{
   author={Hedenmalm, H\aa kan},
   author={Lindqvist, Peter},
   author={Seip, Kristian},
   title={A Hilbert space of Dirichlet series and systems of dilated
   functions in $L^2(0,1)$},
   journal={Duke Math. J.},
   volume={86},
   date={1997},
   number={1},
   pages={1--37},
   issn={0012-7094},
   review={\MR{1427844}},
   doi={10.1215/S0012-7094-97-08601-4},
}

\bib{Kojin}{article}{
   author={Kojin, Kenta},
   title={Some relations between Schwarz-Pick inequality and von Neumann's
   inequality},
   journal={Complex Anal. Oper. Theory},
   volume={18},
   date={2024},
   number={4},
   pages={Paper No. 95, 16},
   issn={1661-8254},
   review={\MR{4744266}},
   doi={10.1007/s11785-024-01526-0},
}

\bib{McCarthy}{article}{
   author={McCarthy, John E.},
   title={Hilbert spaces of Dirichlet series and their multipliers},
   journal={Trans. Amer. Math. Soc.},
   volume={356},
   date={2004},
   number={3},
   pages={881--893},
   issn={0002-9947},
   review={\MR{1984460}},
   doi={10.1090/S0002-9947-03-03452-4},
}

\bib{Nikolski}{article}{
   author={Nikolski, Nikolai},
   title={In a shadow of the RH: cyclic vectors of Hardy spaces on the
   Hilbert multidisc},
   language={English, with English and French summaries},
   journal={Ann. Inst. Fourier (Grenoble)},
   volume={62},
   date={2012},
   number={5},
   pages={1601--1626},
   issn={0373-0956},
   review={\MR{3025149}},
   doi={10.5802/aif.2731},
}

\bib{Nikolskibook}{book}{
   author={Nikolski, Nikolai},
   title={Hardy spaces},
   series={Cambridge Studies in Advanced Mathematics},
   volume={179},
   edition={French edition},
   publisher={Cambridge University Press, Cambridge},
   date={2019},
   pages={xviii+277},
   isbn={978-1-107-18454-1},
   review={\MR{3890074}},
}

\bib{QQ}{book}{
   author={Queff\'elec, Herv\'e},
   author={Queff\'elec, Martine},
   title={Diophantine approximation and Dirichlet series},
   series={Harish-Chandra Research Institute Lecture Notes},
   volume={2},
   publisher={Hindustan Book Agency, New Delhi},
   date={2013},
   pages={xii+232},
   isbn={978-93-80250-53-3},
   review={\MR{3099268}},
}

\bib{varo}{article}{
   author={Varopoulos, N. Th.},
   title={On an inequality of von Neumann and an application of the metric
   theory of tensor products to operators theory},
   journal={J. Functional Analysis},
   volume={16},
   date={1974},
   pages={83--100},
   issn={0022-1236},
   review={\MR{0355642}},
   doi={10.1016/0022-1236(74)90071-8},
}

\bib{vN}{article}{
   author={von Neumann, Johann},
   title={Eine Spektraltheorie f\"ur allgemeine Operatoren eines unit\"aren
   Raumes},
   language={German},
   journal={Math. Nachr.},
   volume={4},
   date={1951},
   pages={258--281},
   issn={0025-584X},
   review={\MR{0043386}},
   doi={10.1002/mana.3210040124},
}

 \end{biblist}\end{bibdiv}

\end{document}